\documentclass{article}
\usepackage[utf8]{inputenc}
\usepackage{amsmath,amsthm,dsfont,pifont,amsfonts,MnSymbol}
\usepackage{faktor}
\usepackage{xfrac}  

\title{Closed and open-closed images of submetrizable spaces}
\author{Vlad Smolin\footnote{Krasovskii Institute of Mathematics and Mechanics of UB RAS, 620108, 16 Sofia Kovalevskaya street, Yekaterinburg, Russia and Ural Federal University, Mathematical Analysis Department, 620002 Ekaterinburg, Russia; \textit{e-mail address}\textup{:} SVRusl@yandex.ru}}

\theoremstyle{plain}
\newtheorem{teo}{Theorem}
\newtheorem{lemm}[teo]{Lemma}
\newtheorem{corr}[teo]{Corollary}
\newtheorem{prop}[teo]{Proposition}
\newtheorem{ques}[teo]{Question}

\theoremstyle{definition}
\newtheorem{deff}[teo]{Definition}

\newtheorem{nota}[teo]{Notation}

\begin{document}

\maketitle
\begin{abstract}
We prove that:
\begin{itemize}
    \item[1.] If a Hausdorff M-space is a continuous closed image of a submetrizable space, then it is metrizable. 
    \item[2.] A dense-in-itself open-closed image of a submetrizable space is submetrizable if and only if it is functionally Hausdorff and has a countable pseudocharacter. 
    \item[3.] Let $Y$ be a dense-in-itself space with the following property: $\forall y\in Y\ \exists Q(y) \subseteq Y\ [y \text{ is a non-isolated q-point in } Q(y)]$. If $Y$ is an open-closed image of a submetrizable space, then $Y$ is submetrizable.
    \item[4.] There exist a submetrizable space $X$, a regular hereditarily paracompact non submetrizable first-countable space $Y$, and an open-closed map $f\colon X \to Y$.
\end{itemize}
\end{abstract}

\section{Introduction}
A space X is submetrizable if X has a weaker metrizable topology, or, equivalently, if X can be mapped onto a metrizable topological space by a continuous one-to-one map.

In \cite{miz} Takemi Mizokami proved that an open perfect image of a submetrizable space is submetrizable. Mizokami also stated that an open compact image of a submetrizable space may not be submetrizable, for the proof he referred to \cite{burke}. Earlier Popov \cite{popov} constructed a perfect map $f \colon X \to Y$, where $X$ is hereditarily paracompact and submetrizable whereas $Y$ is not submetrizable. These results motivate the following question:
\begin{ques}
    Is an open-closed image of a submetrizable space also submetrizable, and if not, when is it submetrizable?
\end{ques}

In this paper we obtain the following results concerning this question:
\begin{itemize}
    \item[\ding{95}] A dense-in-itself open-closed image of a submetrizable space is submetrizable if and only if it is functionally Hausdorff and has a countable pseudocharacter. 
    \item[\ding{95}] Let $Y$ be a dense-in-itself space with the following property: $\forall y\in Y\ \exists Q(y) \subseteq Y\ [y \text{ is a non-isolated q-point in } Q(y)]$. If $Y$ is an open-closed image of a submetrizable space, then $Y$ is submetrizable.
    \item[\ding{95}] There exist a submetrizable space $X$, a regular hereditarily paracompact non submetrizable first-countable space $Y$, and an open-closed map $f\colon X \to Y$. 
\end{itemize}

Submetrizability often appears as a sufficient condition for metrizability of generalized metric spaces or generalizations of compact spaces. In general, such theorems have the following form: Suppose that a space $X$ has the property A. If $X$ is submetrizable, then $X$ is metrizable. It seems interesting to find theorems of the following form: Suppose that a space $X$ has the property A. If $X$ is an image of a submetrizable space under a map with the property B, then $X$ is metrizable.

In \cite{svet}  S. A. Svetlichnyi proved the following:
\begin{teo}
    Let $f \colon X \to Y$ be a map, $X$ submetrizable, and $Y$ M-space. If one of the following conditions hold, then $Y$ is metrizable:
    \begin{itemize}
        \item $f$ is almost-open and compact;
        \item $f$ is open and  boundary-compact, and the set of non-isolated points of $Y$ is at most countable;
        \item $f$ is pseudo-open and compact, and $X$ has a countable i-weight.
    \end{itemize}
\end{teo}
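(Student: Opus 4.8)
The plan is to reduce everything to a single metrization principle together with one elementary metric fact, and then to treat the three cases by equipping $Y$ with a $G_\delta$-diagonal. First I would invoke the classical theorem that a Tychonoff M-space with a $G_\delta$-diagonal is metrizable: an M-space carries a quasi-perfect map onto a metric space, its fibres are countably compact, they inherit a $G_\delta$-diagonal and so are compact metrizable (Chaber), whence the map is perfect and $Y$ is a paracompact M-space with a $G_\delta$-diagonal, which is metrizable (Morita). Thus it suffices to produce open covers $\{\mathcal G_n\}$ of $Y$ with $\bigcap_n \operatorname{St}(y,\mathcal G_n)=\{y\}$ for every $y$. The engine for this is the following observation: since $X$ is submetrizable, fix a continuous metric $d$ generating a weaker topology $\tau_d\subseteq\tau$; the identity $(X,\tau)\to(X,\tau_d)$ is continuous, so every $\tau$-compact set is $\tau_d$-compact, and hence any two disjoint $\tau$-compact subsets of $X$ (in particular two distinct compact fibres $F_y:=f^{-1}(y)$ and $F_q:=f^{-1}(q)$) satisfy $d(F_y,F_q)>0$.

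For the first case ($f$ almost-open with compact fibres) I would use almost-openness to pick, for each $y$, a point $x_y\in F_y$ at which $f$ is open, and set $\mathcal G_n=\{\operatorname{int} f(B_d(x_y,1/n)):y\in Y\}$. Each member is open and contains its centre $y$ (because $B_d(x_y,1/n)$ is a $\tau$-neighbourhood of $x_y$ and $f$ is open at $x_y$), so $\mathcal G_n$ covers $Y$. If $q\in\operatorname{St}(y,\mathcal G_n)$, there is $y^{*}$ with $y,q\in f(B_d(x_{y^{*}},1/n))$, hence points $a\in F_y$ and $b\in F_q$ with $d(a,x_{y^{*}})<1/n$ and $d(b,x_{y^{*}})<1/n$, giving $d(F_y,F_q)\le d(a,b)<2/n$. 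Since $d(F_y,F_q)>0$ for $q\neq y$, the stars shrink to $\{y\}$ and $Y$ has a $G_\delta$-diagonal, so by the principle above $Y$ is metrizable.

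For the second case ($f$ open, boundary-compact, countably many non-isolated points) the structural remark is that openness forces every non-isolated point to have a compact fibre: if $\operatorname{int} F_y\neq\emptyset$, then $f$ carries a nonempty open subset of $F_y$ onto the open singleton $\{y\}$, so $y$ is isolated; hence for non-isolated $y$ the closed fibre $F_y$ has empty interior and equals its compact boundary $\partial F_y$. Thus the metric engine separates any two non-isolated points exactly as in the first case, while each isolated point $z$ can be shielded by the open singleton $\{z\}$. The delicate point, which I expect to require the countability of the non-isolated set, is the interaction between an isolated point with a non-compact fibre and a nearby non-isolated point: a clopen non-compact fibre may $\tau_d$-accumulate onto a compact fibre, so the metric estimate alone does not separate such a pair, and one must combine the openness of the isolated points with the fact that there are only countably many non-isolated centres to complete the diagonal sequence.

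The genuinely hard case is the third ($f$ pseudo-open, compact fibres, $X$ of countable i-weight). Here pseudo-openness yields $y\in\operatorname{int} f(U)$ only when $U$ is open and contains the \emph{whole} fibre $F_y$; the smallest such $U$ one can take is a $d$-neighbourhood of $F_y$, whose diameter is that of $F_y$ and need not be small. Consequently the clean star estimate of the first two cases collapses: two points whose fibres meet a common fibre-neighbourhood near opposite ends of a large-diameter fibre need not be $d$-close, so the naive covers fail to have shrinking stars. This is the main obstacle, and it is precisely where the hypothesis $iw(X)\le\aleph_0$ must be spent: a weaker \emph{separable} metric (equivalently a countable point-separating family of continuous functions, or a countable base for $\tau_d$) reduces the construction to a countable family of fibre-neighbourhoods built over finite unions of basic balls, and a diagonal argument over this countable family---rather than a uniform-radius estimate---is used to separate distinct points. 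I would expect the bulk of the work, and the only part that genuinely uses countable i-weight, to lie in making this countable diagonal argument produce a $G_\delta$-diagonal; once it does, the metrization principle of the first paragraph again yields that $Y$ is metrizable.
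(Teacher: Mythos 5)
First, note that the paper does not prove this statement at all: it is quoted verbatim from Svetlichnyi \cite{svet} as background in the introduction, so there is no in-paper proof to compare your argument against, and I can only assess it on its own terms. Your overall strategy --- reduce everything to exhibiting a $G_\delta$-diagonal on $Y$ via the star characterization, then invoke the standard chain ``the countably compact fibres of the quasi-perfect map onto a metric space inherit the $G_\delta$-diagonal, hence are compact, hence $Y$ is a paracompact M-space with a $G_\delta$-diagonal, hence metrizable'' --- is sound (modulo separation axioms, which you silently upgrade from the paper's blanket $T_1$ to Tychonoff), and your treatment of the first bullet is complete and correct: two distinct compact fibres are disjoint, hence compact and disjoint in the weaker metric $d$, hence at positive $d$-distance, so the covers by $\operatorname{int} f\big[B_d(x_y,1/n)\big]$ centred at the points of almost-openness have shrinking stars.

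For the second and third bullets, however, you do not give a proof; you give an accurate description of why your construction breaks down. In the second case the obstruction you name is real and is not repaired by anything you write: an isolated point $z$ has a clopen but possibly non-compact fibre, and since $f^{-1}(z)$ need not be closed in the weaker metric topology it can $d$-accumulate onto the compact fibre of a non-isolated point $y$, in which case $z \in f\big[B_d(f^{-1}(y),1/n)\big]$ for every $n$ and no cover assembled from such sets ever separates $z$ from $y$, nor two such isolated points from one another; merely enumerating the countably many non-isolated points does not resolve this, and you propose no replacement construction, so it is not even clear from your text that $\{y\}$ is a $G_\delta$-set. The third case is in the same state: you identify where the hypothesis $iw(X)\le\aleph_0$ ``must be spent'' but do not carry out the countable diagonal argument you allude to. As written, the proposal therefore establishes only the first of the three assertions; the other two remain genuine gaps that would have to be closed by a substantially different argument, presumably the one in \cite{svet}.
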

In addition Svetlichnyi constructed an open s-map from a normal space with a countable i-weight onto a non-metrizable compact space. We prove the following:
\begin{itemize}
    \item[\ding{95}] If a Hausdorff M-space is a continuous closed image of a submetrizable space, then it is metrizable.
\end{itemize}

In \cite{chaber} Chaber found characterizations for open compact and perfect images of submetrizable spaces:
\begin{teo}
    A space $Y$ is a perfect image of a space with a weaker metric (separable metric) topology iff $Y$ has a sequence $\{F_n\}_{n \geq 1}$ of (countable) locally finite closed covers such that $\bigcap_{n \geq 1} \mathsf{St}(y, F_n) = \{y\}$ for $y \in Y$.
\end{teo}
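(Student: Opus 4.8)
The statement is a biconditional characterizing perfect images of submetrizable spaces, so the plan is to treat the two implications separately, with the forward (necessity) direction resting on a pushforward lemma and the backward (sufficiency) direction on an explicit diagonal-type construction. For necessity, suppose $f\colon X\to Y$ is perfect and $d$ is a metric on $X$ inducing a topology weaker than the given one. The first step is to record the standard fact that a perfect map carries a locally finite closed family to a locally finite closed family: each image $f(G)$ is closed because $f$ is closed, and local finiteness at $y$ follows by covering the compact fiber $f^{-1}(y)$ by finitely many sets each meeting only finitely many members of the family, taking their union $W$, and using closedness of $f$ to find an open $V\ni y$ with $f^{-1}(V)\subseteq W$. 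Working next in the paracompact metric space $(X,d)$, for each $n$ I would take a locally finite open refinement $\mathcal U_n$ of the cover by $d$-balls of radius $1/n$ and set $\mathcal G_n=\{\overline U:U\in\mathcal U_n\}$, a locally finite closed cover of $X$ with $d$-mesh at most $2/n$, so that $\mathsf{St}(x,\mathcal G_n)\subseteq\overline B_d(x,2/n)$ and hence $\bigcap_n\mathsf{St}(x,\mathcal G_n)=\{x\}$. The members of $\mathcal G_n$ are closed in the original topology as well, so $F_n:=\{f(G):G\in\mathcal G_n\}$ is, by the lemma, a locally finite closed cover of $Y$.

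It then remains to verify $\bigcap_n\mathsf{St}(y,F_n)=\{y\}$. If $y'$ lies in every $\mathsf{St}(y,F_n)$, then for each $n$ there is $G_n\in\mathcal G_n$ meeting both $f^{-1}(y)$ and $f^{-1}(y')$, say at $x_n$ and $x_n'$, whence $d(x_n,x_n')\le 2/n$. Since the identity $(X,\tau)\to(X,d)$ is continuous, the fibers $f^{-1}(y)$ and $f^{-1}(y')$ are $d$-compact; a subsequence of $(x_n)$ thus converges in $d$ to some $x_*\in f^{-1}(y)$, then $x_n'\to x_*$ too, and $d$-closedness of $f^{-1}(y')$ forces $x_*\in f^{-1}(y')$, giving $y=y'$. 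When $d$ is separable the refinements $\mathcal G_n$ may be chosen countable, so the $F_n$ are countable.

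For sufficiency, index $F_n=\{F_{n,\alpha}:\alpha\in A_n\}$, equip each $A_n$ with the discrete topology, and set
\[
X=\bigl\{(y,(\alpha_n))\in Y\times\textstyle\prod_n A_n : y\in F_{n,\alpha_n}\ \text{for all } n\bigr\},
\]
with the subspace topology, letting $f\colon X\to Y$ be the first projection. Surjectivity is immediate from each $F_n$ being a cover, and $f$ is continuous. The crucial observation is that the second projection $\pi\colon X\to\prod_n A_n$ is injective: if $(y,(\alpha_n))$ and $(y',(\alpha_n))$ both lie in $X$, then $y,y'\in F_{n,\alpha_n}$ for every $n$, so $y'\in\bigcap_n\mathsf{St}(y,F_n)=\{y\}$. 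Since $\prod_n A_n$ is metrizable, $\pi$ endows $X$ with a weaker metric topology, so $X$ is submetrizable (separable metrizable when the $A_n$ are countable). Point-finiteness of each $F_n$ makes $S_n:=\{\alpha:y\in F_{n,\alpha}\}$ finite, whence $f^{-1}(y)=\{y\}\times\prod_n S_n$ is compact by Tychonoff.

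The main obstacle, and the step I expect to require the most care, is showing that $f$ is closed, which I would do through the criterion that for every open $U\supseteq f^{-1}(y)$ there is an open $V\ni y$ with $f^{-1}(V)\subseteq U$. Covering the compact fiber by finitely many basic open boxes contained in $U$ yields a finite set $E$ of ``active'' coordinates and a neighborhood $W$ of $y$. The decisive use of local finiteness is that for $n\in E$ the set $C_n=\bigcup_{\alpha\notin S_n}F_{n,\alpha}$ is closed and misses $y$, so $V=W\cap\bigcap_{n\in E}(Y\setminus C_n)$ is a neighborhood of $y$; any $(y',(\beta_n))\in f^{-1}(V)$ then satisfies $\beta_n\in S_n$ for $n\in E$, which lets me match it to one of the finitely many boxes and conclude $f^{-1}(V)\subseteq U$. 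Together with compact fibers this gives that $f$ is perfect, completing the construction, and the parenthetical separable statement follows by tracking countability of the covers throughout both directions.
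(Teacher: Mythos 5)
This theorem is quoted in the paper as a known result of Chaber (Theorem~2, cited from \cite{chaber}) and is given no proof there, so there is nothing internal to compare against; judged on its own, your proof is correct and complete. The necessity direction (pushing forward locally finite closed covers of small $d$-mesh via the perfect map, then using $d$-compactness of fibers to extract the convergent subsequence forcing $y=y'$) and the sufficiency direction (the Ponomarev-style subspace of $Y\times\prod_n A_n$, with injectivity of the second projection giving the weaker metric topology, point-finiteness giving compact fibers, and closedness of $C_n=\bigcup_{\alpha\notin S_n}F_{n,\alpha}$ --- the one place local finiteness rather than mere point-finiteness is essential --- giving closedness of $f$ via the standard neighbourhood criterion) both check out, including the slightly compressed matching step, which is completed by extending $(\beta_n)_{n\in E}$ to a point of the fiber over $y$ and locating it in one of the finitely many boxes.
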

\begin{teo}
    A space $Y$ is an open and compact image of a space with a weaker metric (separable metric) topology iff $Y$ has a sequence of (countable) point-finite open covers $\{V_n\}_{n \geq 1}$ such that $\bigcap_{n \geq 1} \mathsf{St}(y, V_n) = \{y\}$ for $y \in Y$.
\end{teo}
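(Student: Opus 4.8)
The plan is to prove both implications of the equivalence in parallel, carrying the parenthetical countable/separable refinement alongside at each step. Throughout, for the ``only if'' direction write $f\colon X\to Y$ for an open surjection with compact fibres, where $X$ carries a topology $\tau_X$ refining a metrizable topology $\tau_d$ induced by a metric $d$; and for the ``if'' direction write $V_n=\{V_{n,\alpha}\}_{\alpha\in A_n}$ for the postulated point-finite open covers with $\bigcap_n\mathsf{St}(y,V_n)=\{y\}$. Two elementary facts drive the whole argument: metric spaces are paracompact, so ball covers admit locally finite open refinements; and a family that is locally finite in the coarser topology $\tau_d$ is still locally finite in the finer $\tau_X$, hence meets each compact fibre in only finitely many members.

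For necessity I would fix, for each $n$, a locally finite open refinement $\mathcal{U}_n$ (in $\tau_d$) of the cover of $X$ by open $d$-balls of radius $1/n$, so every $U\in\mathcal{U}_n$ has $d$-diameter at most $2/n$. Since $f$ is open and onto, $V_n:=\{f(U):U\in\mathcal{U}_n\}$ is an open cover of $Y$. Point-finiteness is immediate from the second fact above, because $y\in f(U)$ iff $U$ meets the compact fibre $f^{-1}(y)$, and a locally finite family meets a compact set in finitely many members. For the star condition, given $y\neq y'$ the fibres $f^{-1}(y),f^{-1}(y')$ are disjoint and compact in the metric space $(X,\tau_d)$, hence at positive distance $\delta$; choosing $n$ with $2/n<\delta$, no member of $\mathcal{U}_n$ meets both fibres, so $y'\notin\mathsf{St}(y,V_n)$ and therefore $\bigcap_n\mathsf{St}(y,V_n)=\{y\}$. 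In the separable case $(X,\tau_d)$ is Lindelöf, so each locally finite $\mathcal{U}_n$, and hence each $V_n$, is countable.

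For sufficiency I would run a Ponomarev-type construction. Regard each index set $A_n$ as discrete, metrize the countable product $P:=\prod_n A_n$ by a metric $\rho$, and take the subspace $X:=\{(y,s)\in Y\times P : y\in V_{n,s_n}\text{ for all }n\}$ with $f(y,s)=y$. A neighbourhood base at $(y_0,s_0)$ is given by the sets $O((y_0,s_0),n,W)=\{(y,s)\in X : y\in W,\ s_i=(s_0)_i\text{ for }i\le n\}$, with $W$ an open neighbourhood of $y_0$ in $Y$. A short computation gives $f\bigl(O((y_0,s_0),n,W)\bigr)=W\cap\bigcap_{i\le n}V_{i,(s_0)_i}$, which is open, so $f$ is open (and evidently continuous and onto). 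Each fibre $f^{-1}(y)$ carries the product topology on $\prod_n\{\alpha\in A_n : y\in V_{n,\alpha}\}$, a product of finite discrete sets by point-finiteness, hence compact.

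The crucial point — and the step I expect to be the main obstacle — is establishing submetrizability while keeping the map open: these requirements pull in opposite directions, and the star hypothesis is exactly what reconciles them. One checks that the second-coordinate projection $\pi\colon X\to P$ is $\tau_X$-continuous and, using the hypothesis, injective, since $(y,s),(y',s)\in X$ force $y'\in\bigcap_n\mathsf{St}(y,V_n)=\{y\}$. Pulling $\rho$ back along this continuous injection yields a metric on $X$ whose topology is coarser than $\tau_X$, so $X$ is submetrizable; this injectivity-from-the-star-condition is the linchpin of the construction. Finally, when the covers are countable each $A_n$ is countable, $P$ is separable metrizable, and the pulled-back metric is separable, which delivers the parenthetical refinement.
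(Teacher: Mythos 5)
The paper does not prove this statement: it is quoted verbatim as a known theorem of Chaber (reference \cite{chaber}), so there is no in-paper proof to compare against. Your argument is correct and is essentially the standard one for such characterizations: Stone's theorem plus compactness of fibres in the coarser metric topology for necessity, and a Ponomarev-type inverse-limit construction for sufficiency, with the star condition supplying injectivity of the projection to the index product and hence submetrizability of the constructed preimage. All the delicate points check out --- in particular that members of the refining covers, being open in the weaker metric topology, are still open in the finer topology so that their $f$-images are open, and that the fibres, compact in the finer topology, remain compact and hence at positive distance in the metric one.
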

From this theorems Chaber deduced corollaries about submetrizability of perfect and open compact images of submetrizable spaces:
\begin{teo}
    If a collectionwise normal (normal) and perfectly normal space $Y$ is a perfect image of a space $X$ with a weaker (separable) metric topology, then $Y$ has a weaker (separable) metric topology. 
\end{teo}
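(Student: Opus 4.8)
The plan is to reduce submetrizability to the construction of a countable family of continuous pseudometrics on $Y$ that separates points. Indeed, if $\{d_k\}_{k\ge 1}$ are continuous pseudometrics on $Y$ and for every pair $y\neq z$ some $d_k(y,z)>0$, then $d=\sum_k 2^{-k}\min(1,d_k)$ is a continuous metric, so the topology $\tau_d$ it generates is metrizable and coarser than the topology of $Y$; hence $Y$ is submetrizable. In the separable variant it suffices to produce a countable point-separating family of continuous maps $Y\to[0,1]$, since the diagonal map then embeds $Y$ continuously and injectively into the Hilbert cube $[0,1]^\omega$, which is separable and metrizable. The input for both constructions is supplied by the perfect-image characterization recalled above: because $Y$ is a perfect image of a (separably) submetrizable space, $Y$ carries a sequence $\{F_n\}_{n\ge1}$ of (countable) locally finite closed covers with $\bigcap_{n\ge1}\mathsf{St}(y,F_n)=\{y\}$ for all $y$. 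Throughout I use that each point lies in only finitely many members of $F_n$ (local finiteness) and in at least one (cover).

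First I would dispose of the separable, perfectly normal case, where every $F_n=\{F_{n,k}\}_{k}$ is countable. Since $Y$ is perfectly normal, each closed set $F_{n,k}$ is a zero-set, so I choose continuous $u_{n,k}\colon Y\to[0,1]$ with $u_{n,k}^{-1}(0)=F_{n,k}$. I claim $\{u_{n,k}\}$ separates points. Given $y\neq z$, the star condition yields $n$ with $z\notin\mathsf{St}(y,F_n)$; pick $k$ with $y\in F_{n,k}$ (a cover), so $u_{n,k}(y)=0$, while $z\notin\mathsf{St}(y,F_n)\supseteq F_{n,k}$ forces $z\notin F_{n,k}$ and hence $u_{n,k}(z)>0$. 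The family is countable, so the diagonal map into $[0,1]^\omega$ is a continuous injection, proving $Y$ has a weaker separable metric topology. Here only perfect normality (for the zero-sets) and countability of the covers are used.

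For the general collectionwise normal case the index sets $A_n$ of $F_n=\{F_{n\alpha}\}_{\alpha\in A_n}$ are uncountable and the Hilbert-cube trick fails, so I would build the pseudometrics $d_n$ by hand. By the expansion characterization of collectionwise normality, the locally finite closed family $\{F_{n\alpha}\}$ admits a locally finite open expansion $\{V_{n\alpha}\}$ with $F_{n\alpha}\subseteq V_{n\alpha}$. Using perfect normality I sharpen the usual Urysohn bump: pick continuous $w_{n\alpha}$ equal to $1$ on $F_{n\alpha}$ and $0$ off $V_{n\alpha}$, and $u_{n\alpha}$ with $u_{n\alpha}^{-1}(0)=F_{n\alpha}$, and set $g_{n\alpha}=(1-u_{n\alpha})\,w_{n\alpha}$. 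Then $\operatorname{supp}g_{n\alpha}\subseteq V_{n\alpha}$ and, crucially, $g_{n\alpha}^{-1}(1)=F_{n\alpha}$ exactly. I then define $d_n(y,z)=\sup_{\alpha}|g_{n\alpha}(y)-g_{n\alpha}(z)|$.

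It remains to verify two properties. Continuity of $d_n$: near any $(y_0,z_0)$, local finiteness of $\{V_{n\alpha}\}$ gives a product neighborhood on which all but finitely many $g_{n\alpha}$ vanish identically, so $d_n$ there agrees with a maximum of finitely many continuous functions and is a continuous pseudometric. Separation: if $z\notin\mathsf{St}(y,F_n)$, choose $\alpha$ with $y\in F_{n\alpha}$; then $g_{n\alpha}(y)=1$, while $z\notin F_{n\alpha}=g_{n\alpha}^{-1}(1)$ gives $g_{n\alpha}(z)<1$, whence $d_n(y,z)\ge 1-g_{n\alpha}(z)>0$. By the star condition $\{d_n\}$ separates points, and the reduction of the first paragraph finishes the proof. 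I expect the main obstacle to be exactly the continuity of $d_n$ over an uncountable index set: this is what forces the localized, locally finite family of supports, and it is precisely here that collectionwise normality (via the open expansion) and the sharp level-set identity $g_{n\alpha}^{-1}(1)=F_{n\alpha}$ (via perfect normality) are both indispensable.
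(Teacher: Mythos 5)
Your argument is correct, but note that the paper itself offers no proof of this statement: it is quoted verbatim from Chaber's paper \cite{chaber} as a known corollary of the characterization theorem stated just before it, so there is nothing internal to compare against. Your derivation is exactly the kind of argument that deduces the corollary from that characterization, and both halves check out. In the separable case the countable family $\{u_{n,k}\}$ of zero-set witnesses does separate points via the star condition, and the diagonal injection into $[0,1]^{\omega}$ gives the weaker separable metric topology; only perfect normality and countability of the covers are used, as you say. In the general case the pseudometrics $d_n(y,z)=\sup_{\alpha}|g_{n\alpha}(y)-g_{n\alpha}(z)|$ are well defined, continuous by local finiteness of the expansion, and separating because $g_{n\alpha}^{-1}(1)=F_{n\alpha}$ exactly; summing them yields a continuous metric whose balls are open in $\tau(Y)$, hence a weaker metrizable topology. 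Two small points you should tighten. First, the step ``every locally finite closed family in a collectionwise normal space has a locally finite open expansion'' is true but is not the definition of collectionwise normality; it is the Kat\v{e}tov expandability theorem and should be cited (it is where collectionwise normality, rather than mere normality, enters). Second, $\operatorname{supp} g_{n\alpha}$ is contained in $\mathsf{Cl}(V_{n\alpha})$, not necessarily in $V_{n\alpha}$; your continuity argument only needs the cozero set $g_{n\alpha}^{-1}\big[(0,1]\big]\subseteq V_{n\alpha}$ together with local finiteness of $\{V_{n\alpha}\}$, so the proof survives, but the statement as written is slightly off.
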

\begin{teo}
    If a collectionwise normal (normal) space $Y$ is an open and compact image of a space $X$ with a weaker (separable) metric topology, then $Y$ has a weaker (separable) metric topology. 
\end{teo}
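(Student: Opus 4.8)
The plan is to feed the characterization theorem stated immediately above into two parallel arguments, one for the collectionwise normal case and one (separable) for the normal case; the gap between the two separation hypotheses exactly mirrors the gap between point-finite and countable covers. Applying that theorem to the hypothesis that $Y$ is an open and compact image of a space with a weaker (separable) metric topology, I obtain a sequence $\{V_n\}_{n\ge 1}$ of point-finite open covers of $Y$, countable in the separable case, with $\bigcap_{n\ge 1}\mathsf{St}(y,V_n)=\{y\}$ for every $y\in Y$. The only feature of this sequence I exploit is that it separates points through stars, and I record at once that this feature passes to any sequence of open refinements, since refining a cover can only shrink its stars; this lets me replace the $V_n$ by better-behaved covers for free. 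In both cases the target is a (separable) metric topology weaker than that of $Y$, equivalently a continuous metric on $Y$, which I will build from the covers.

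For the separable case I avoid local finiteness entirely. Each $V_n=\{V_{n,k}\}_{k<\omega}$ is a countable point-finite open cover of the normal space $Y$, hence shrinkable: there are open $W_{n,k}$ with $\overline{W_{n,k}}\subseteq V_{n,k}$ whose union is $Y$. Urysohn's lemma yields continuous $g_{n,k}\colon Y\to[0,1]$ equal to $1$ on $\overline{W_{n,k}}$ and to $0$ off $V_{n,k}$, and I assemble them into a single continuous map $f\colon Y\to[0,1]^{\omega\times\omega}$ given by $f(y)=(g_{n,k}(y))_{n,k}$ into a separable metric cube. Injectivity is precisely the star condition: for $x\ne y$ pick $n$ with $x\notin\mathsf{St}(y,V_n)$ and then $k$ with $y\in W_{n,k}$; then $g_{n,k}(y)=1$ while $g_{n,k}(x)=0$, since $x\notin V_{n,k}$. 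A continuous injection into a separable metric space pulls the metric back to a continuous metric on $Y$ whose topology is separable and weaker, which is exactly the desired conclusion.

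For the collectionwise normal case the covers may be uncountable, so an embedding into a metric cube is unavailable and I must produce a genuine continuous metric directly. This is where collectionwise normality is spent: I invoke the classical lemma, the engine behind Michael's theorem that collectionwise normal metacompact spaces are paracompact, that every point-finite open cover of a collectionwise normal space has a locally finite open refinement. Applying it to each $V_n$ yields locally finite open covers $W_n$ still satisfying $\bigcap_{n\ge1}\mathsf{St}(y,W_n)=\{y\}$. As $Y$ is normal and $W_n$ is locally finite, there is a subordinate partition of unity $\{\phi_W\}_{W\in W_n}$ with $\{\phi_W>0\}\subseteq W$, and I set $\rho_n(x,y)=\sum_{W\in W_n}|\phi_W(x)-\phi_W(y)|$, a continuous pseudometric by local finiteness; if $y\notin\mathsf{St}(x,W_n)$ then any $W$ with $\phi_W(x)>0$ forces $\rho_n(x,y)>0$. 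Finally $\rho=\sum_{n}2^{-n}\min(\rho_n,1)$ is a continuous pseudometric, which the star condition promotes to a metric, and the continuity of $\rho$ makes every $\rho$-ball open, so its metric topology is weaker than that of $Y$.

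The main obstacle is the single non-trivial input in the collectionwise normal case, the passage from point-finite to locally finite open refinements; the remainder is bookkeeping with Urysohn functions and partitions of unity. I would either cite this refinement lemma in the form used in the proof of Michael's theorem, or, for a self-contained treatment, reprove it by an induction on the finite order of points in $V_n$, using collectionwise normality to separate the discrete families that arise at each level. I would also pause to record the two routine facts that a continuous pseudometric has open balls (so its topology is weaker) and that a countable separating family of continuous functions places $Y$ inside a separable metric space, so that both halves of the statement come out literally in the required form.
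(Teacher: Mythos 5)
This statement is one of the background results the paper quotes verbatim from Chaber \cite{chaber}; the paper itself contains no proof of it, so there is no internal argument to compare against. Judged on its own merits, your derivation from the characterization theorem stated just above it is correct and complete, and it is essentially the deduction Chaber's corollary rests on. The separable half is fine: every point-finite open cover of a normal space admits an open shrinking (the classical shrinking lemma), your Urysohn functions $g_{n,k}$ give a continuous map into $[0,1]^{\omega\times\omega}$, and the injectivity argument via the star condition ($y\in W_{n,k}\subseteq V_{n,k}\subseteq \mathsf{St}(y,V_n)\not\ni x$, so $g_{n,k}(y)=1$, $g_{n,k}(x)=0$) is exactly right; pulling back the metric along a continuous injection yields the weaker separable metric topology. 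The collectionwise normal half correctly isolates the one nontrivial input, Michael's 1955 lemma that every point-finite open cover of a collectionwise normal space has a locally finite open refinement, and your observations that refinement only shrinks stars and that the partition-of-unity pseudometrics $\rho_n$ are continuous (locally finite sums, bounded by $2$) and satisfy $\rho_n(x,y)\geq 1$ when $x\notin\mathsf{St}(y,W_n)$ are all sound, so $\rho=\sum_n 2^{-n}\min(\rho_n,1)$ is a continuous metric whose balls are open, i.e.\ a weaker metric topology. The only advice: cite Michael's refinement lemma rather than reproving it by induction on order, since the inductive reproof is genuinely delicate; everything else in your write-up is routine and correctly executed.
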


Also many interesting results about compact images of submetrizable spaces were obtained in \cite{Yan} and \cite{Lin}. For open complete and open uniformly complete images of submetrizable spaces see \cite{Chob}.

\section{Notation and terminology}
We use terminology from \cite{enc}. Throughout this paper all topological spaces are assumed to be $T_1$, all maps are continuous and onto. 

\begin{nota} ...
    \begin{itemize}
        \item[\ding{46}\,] $\mathbb{P} \coloneq $ the set of irrational numbers;
        \item[\ding{46}\,] $\mathbb{I} \coloneq $ the interval $[0,1]$;
        \item[\ding{46}\,] $\mathbb{Q} \coloneq $ the set of rational numbers;
        \item[\ding{46}\,] $\mathbb{M} \coloneq $ the Michael line, i.e. the set $\mathbb{R}$ with the topology generated by the base $\{U\cup K\ \colon\ U \in \tau_{\mathbb{R}} \text{ and } K \subseteq \mathbb{P}\}$ (for details see page 77 in \cite{enc}).
    \end{itemize}
\end{nota}

\begin{nota} Let $X$ be a topological space. Then
    \begin{itemize}
        \item[\ding{46}\,] if $\langle x_n \rangle_{n \in \omega}$ is a sequence in $X$, then $\langle x_n \rangle_{n \in \omega} \xrightarrow{X} x$ means that $\langle x_n \rangle_{n \in \omega}$ converges to $x$ in $X$;
        \item[\ding{46}\,] $X$ is called functionally Hausdorff if for any distinct points $x, y \in X$ there exists a continuous function $f\colon X \to \mathbb{I}$ such that $f(x) = 0$ and $f(y) = 1$;
        \item [\ding{46}\,] ${g}\circ {f}$ is the composition of functions ${g}$ and ${f}$ (that is, ${g}$ after ${f}$);
        \item [\ding{46}\,] ${f} \upharpoonright A$ $\coloneq$ the restriction of the function ${f}$ to the set ${A}$;
        \item[\ding{46}\,] $\mathsf{C}(X) \coloneq$ the family of all continuous real valued functions on $X$;
        \item[\ding{46}\,] a subset of $X$ is called {\it cozero} if it is equal to $f^{-1}\big [(0,1]\big ]$ for some continuous function $f\colon X \to \mathbb{I}$; 
        \item[\ding{46}\,] a subset $A$ of $X$ is called {\it relatively pseudocompact} if $f \upharpoonright A$ is bounded for every $f \in \mathsf{C}(X)$;
        \item[\ding{46}\,] $\mathsf{coz}(X) \coloneq$ the set of cozero sets in X;
        \item[\ding{46}\,] $\tau(X)\downarrow\ \coloneq \{\sigma \subseteq \tau(X)\ \colon\ \sigma \text{ is a topology}\}$
        \item[\ding{46}\,] $\tau(X)\downarrow_{metr}\ \coloneq \{\sigma \subseteq \tau(X)\ \colon\ \sigma \text{ is a topology and } \langle X, \sigma\rangle \text{ is metrizable}\}$;
        \item[\ding{46}\,] Let $\mathfrak{F}$ be a collection of subsets of $X$. Then $\mathfrak{F}$ is called hereditarily closure-preserving (HCP) if for every subcollection $\{F_\alpha\ \colon\ \alpha \in \Gamma\} \subseteq \mathfrak{F}$ and any $B_\alpha \subseteq F_\alpha$, $\bigcup_{\alpha \in \Gamma}\mathsf{Cl}(B_\alpha) = \mathsf{Cl}(\bigcup_{\alpha \in \Gamma}B_\alpha)$;
        \item[\ding{46}\,] let $A$ be a subset of $X$. Then $\partial_{\tau(X)}A \coloneq \mathsf{Cl}(A) \cap \mathsf{Cl}(X \setminus A) = $ the {\it boundary} of $A$;
        \item[\ding{46}\,] let $A$ be a subset of $X$. Then $\tau(X) \upharpoonright A \coloneq \{U \cap A\ \colon\ U \in \tau(X) \} =$ the subspace topology on $A$. Sometimes we will write $\langle A, \tau(X) \rangle$ instead of $\langle A, \tau(X) \upharpoonright A \rangle$;
        \item[\ding{46}\,] let $x \in X$. Then $x$ is called a {\it q-point} \cite{Mich} if there exists a sequence $\langle U_i \rangle_{i \in \omega}$ of open neigbourhoods of $x$ such that, if $x_i \in U_i$ and the $x_i$ are all distinct, then $\{x_i\ \colon\ i \in \omega\}$ has an accumulation point.
    \end{itemize}
\end{nota}

\begin{nota} Let $X$ and $Y$ be a topological spaces and let $f \colon X \to Y$ be a map. Then 
    \begin{itemize}
        \item[\ding{46}\,] $f$ is called {\it open} ({\it closed}) if for every open (closed) subset $A$ of $X$ the set $f[A]$ is open (closed) in $Y$;
        \item[\ding{46}\,] $f$ is called {\it open-closed} if $f$ is open and closed;
        \item[\ding{46}\,] $f$ is called {\it perfect} if $f$ is closed and for every $y \in Y$, $f^{-1}(y)$ is compact;
        \item[\ding{46}\,] $f$ is called {\it quasi-perfect} if $f$ is closed and for every $y \in Y$, $f^{-1}(y)$ is countably compact.
    \end{itemize}
\end{nota}

The following is our main technical tool in this paper:
\begin{deff}
    Let $X$ be a topological space, $\gamma \in \tau(X)\downarrow$, and let $f$ be a continuous function from $X$ to $Y$. Then $\sigma(\gamma, f) \coloneq$ the topology on $X$ generated by the subbase $\gamma \cup \{f^{-1}[U]\ \colon\ U \in \tau(Y)\}$.
\end{deff}

\section{Known results that we need}

In this section we list the known results that we use in our proofs.

\begin{prop}[Lemma 3.1 in \cite{oka}] \label{Oka_lemma}
    Let $X$ be a submetrizable space, and let $\mathfrak{U}$ be a $\sigma$-discrete collection of cozero sets of $X$. Then there exist a metric space $M$ and a one-to-one map $f\colon X \to M$ such that $f[U]$ is an open set of $M$ for every $U \in \mathfrak{U}$.  
\end{prop}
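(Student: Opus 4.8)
The plan is to superpose the weaker metric topology supplied by submetrizability with one auxiliary metric-valued map for each discrete layer of $\mathfrak{U}$, and then take a countable product. Write $\mathfrak{U} = \bigcup_{n \in \omega} \mathfrak{U}_n$, where each $\mathfrak{U}_n = \{U^n_\alpha : \alpha \in A_n\}$ is discrete, and fix continuous functions $h^n_\alpha \colon X \to \mathbb{I}$ with $U^n_\alpha = (h^n_\alpha)^{-1}\big[(0,1]\big]$. Since $X$ is submetrizable, fix a one-to-one continuous map $g \colon X \to N$ onto a metric space $N$. The single-set case is already transparent: the map $x \mapsto (g(x), h(x))$ into $N \times \mathbb{I}$ is injective and carries a cozero set $h^{-1}\big[(0,1]\big]$ to the trace of the open set $N \times (0,1]$. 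The whole difficulty is to do this for an entire discrete family at once, with a single metric target.

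For a fixed $n$ I would encode the family by a map into the metric hedgehog $J(A_n)$ (that is, $|A_n|$ copies of $\mathbb{I}$ glued at a common origin $0$). Define $F_n \colon X \to J(A_n)$ by sending $x \in U^n_\alpha$ to the point at distance $h^n_\alpha(x)$ along the $\alpha$-th spine and sending every other $x$ to $0$; this is well defined because discreteness forces the members of $\mathfrak{U}_n$ to be pairwise disjoint. The key point is continuity: at a point of some $U^n_\alpha$ the map locally agrees with $y \mapsto (h^n_\alpha(y), \alpha)$, while at a point $x$ with $F_n(x) = 0$ discreteness provides a neighbourhood meeting at most one $U^n_\beta$, so on it $F_n$ takes values only on the single spine $\beta$ (or at $0$), and continuity reduces to $h^n_\beta(y) \to h^n_\beta(x) = 0$. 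By construction, the preimage under $F_n$ of the open spine $\{(t,\alpha) : t \in (0,1]\}$ is exactly $U^n_\alpha$.

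Finally I would assemble everything as $f \colon X \to N \times \prod_{n \in \omega} J(A_n)$, $f(x) = \big(g(x), (F_n(x))_{n}\big)$. A countable product of metric spaces is metrizable, so that $M = f[X]$ is a metric space and $f$ is continuous; injectivity of $f$ follows from that of $g$. For $U = U^n_\alpha \in \mathfrak{U}_n$, consider the basic open set $W \subseteq N \times \prod_m J(A_m)$ that is unrestricted on the $N$-factor and on every $J(A_m)$ with $m \neq n$, and equals the open $\alpha$-spine of $J(A_n)$ on the $n$-th factor. Then $f^{-1}[W] = U$, and since $f$ is injective $f[U] = f[X] \cap W$, which is open in $M$. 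This gives the conclusion.

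The step I expect to be the genuine obstacle is handling a possibly uncountable discrete family within a single metric-valued coordinate: one cannot simply add one $\mathbb{I}$-coordinate per set, as that would be an uncountable product and hence not metrizable. The hedgehog collapses all spines to a common origin, and it is precisely \emph{discreteness}, not mere disjointness, that keeps $F_n$ continuous at the origin points, since it prevents values from accumulating along infinitely many spines simultaneously. Everything else is the routine verification that the trace of a product-basic open set is open and that a product of countably many metric factors is metrizable.
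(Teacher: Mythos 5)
The paper does not prove this statement; it is quoted verbatim as a known result (Lemma~3.1 of Oka's paper) in the ``Known results'' section, so there is no in-paper argument to compare against. Your proof is correct and complete: the hedgehog $J(A_n)$ is exactly the right device for packaging an uncountable discrete layer into a single metric coordinate, the discreteness of $\mathfrak{U}_n$ (not mere disjointness) is correctly identified as what makes each $F_n$ continuous at points mapping to the origin, and the final verification that $f[U^n_\alpha]=f[X]\cap W$ is sound. This is essentially the standard argument one would expect behind Oka's lemma, so nothing further is needed.
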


\begin{prop} [Theorem 2.1 in \cite{Mich}] \label{Michael_theorem}
    Let $f\colon X \to Y$ be a closed surjection. If $y\in Y$ is a q-point, then $\partial_{\tau(X)}f^{-1}(y)$ is relatively pseudocompact.
\end{prop}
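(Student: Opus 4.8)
The plan is to argue by contradiction, with the closedness of $f$ providing the leverage. Write $F = f^{-1}(y)$; since $Y$ is $T_1$ and $f$ is continuous, $F$ is closed, so $\partial_{\tau(X)}F = F \cap \mathsf{Cl}(X\setminus F)$ is the set of points of $F$ that are limits of $X\setminus F$. Suppose some $g \in \mathsf{C}(X)$ is unbounded on $\partial F$; replacing $g$ by $-g$ if necessary, I may assume $\sup_{\partial F} g = +\infty$. Since $y$ is a q-point I fix a \emph{decreasing} sequence $U_0 \supseteq U_1 \supseteq \cdots$ of open neighbourhoods of $y$ (by intersecting the ones given by the definition) with the property that every sequence of distinct points $z_i \in U_i$ has an accumulation point in $Y$.

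The heart of the argument is to convert the unboundedness of $g$ into a sequence of \emph{distinct} points $z_n \in U_n$ that is the $f$-image of a closed discrete subset of $X$. I would build $x_n \in X \setminus F$ and set $z_n = f(x_n)$ by induction so that $g(x_n) > n$, $z_n \in U_n$, and the $z_n$ are pairwise distinct. At stage $n$, pick $p_n \in \partial F$ with $g(p_n) > n$; then $f(p_n)=y$, and because $Y$ is $T_1$ the set $V_n := U_n \setminus \{z_0,\dots,z_{n-1}\}$ is an open neighbourhood of $y$, so continuity of $f$ yields an open $O_n \ni p_n$ with $f[O_n] \subseteq V_n$. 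As $p_n \in \mathsf{Cl}(X\setminus F)$ and $g(p_n) > n$, the open set $O_n \cap g^{-1}[(n,\infty)]$ meets $X\setminus F$; any point $x_n$ of that intersection satisfies $f(x_n)\neq y$, $f(x_n)\in V_n\subseteq U_n$, $f(x_n)\notin\{z_0,\dots,z_{n-1}\}$, and $g(x_n)>n$, exactly as required.

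The contradiction is then immediate. Since $g(x_n) > n \to \infty$ and $g$ is continuous, the set $D = \{x_n\}$, and every infinite subset of it, has no accumulation point in $X$ and is therefore closed and discrete; as $f$ is closed, $Z = f[D] = \{z_n\}$ is closed in $Y$. The q-point property applied to the distinct points $z_n \in U_n$ supplies an accumulation point $y^*$ of $Z$, and closedness of $Z$ forces $y^* = z_m$ for some $m$. Removing the single preimage $x_m$ leaves $D' = D\setminus\{x_m\}$ closed discrete, so $f[D'] = Z\setminus\{y^*\}$ is closed; but $y^*$ is still an accumulation point of $Z\setminus\{y^*\}$ while lying outside it, so this set is not closed — the desired contradiction. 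Hence no such $g$ exists and $\partial_{\tau(X)}f^{-1}(y)$ is relatively pseudocompact.

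I expect the inductive construction to be the one delicate point. The naive choice of a point near $p_n$ with large $g$-value can keep landing inside a single fibre $f^{-1}(y')$ with $y'\neq y$, in which case the $z_n$ fail to be distinct and the q-point property cannot be applied. The device that dissolves this obstacle is to use $T_1$-separation of $Y$ to delete the finitely many previously chosen images from $U_n$ \emph{before} invoking continuity of $f$, which forces the $z_n$ to come out pairwise distinct. The remaining ingredients — closed-discreteness of $D$ from $g\to\infty$, and the one-point removal trick — are routine.
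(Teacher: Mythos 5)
Your argument is correct: the inductive construction produces pairwise distinct $z_n\in U_n$ whose preimages form a closed discrete set (because $g(x_n)>n$), and the one-point-removal trick then contradicts closedness of $f$ via the q-point accumulation point. The paper states this proposition as a known result (Theorem 2.1 of Michael's 1964 paper) and gives no proof of its own; your proof is essentially Michael's original argument, so there is nothing to compare beyond noting that it is the standard one.
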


\begin{prop} [Lemma 4 in \cite{burke_eng}] \label{Eng_lemma}
    Suppose $p$ is a limit point of a set $A$ in a space $X$ and that there is a $G_\delta$-subset $G$ of X which contains $p$ and has $G \cap (A \setminus \{p\}) = \varnothing$. Then any HCP collection of neighbourhoods of $p$ must be finite. 
\end{prop}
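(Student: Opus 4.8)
The plan is to argue by contradiction and convert the hypotheses into a failure of closure‑preservation. Suppose $\mathcal H=\{N_\alpha\}$ is an \emph{infinite} HCP collection of neighbourhoods of $p$. Since every subcollection of an HCP collection is again HCP, I fix a countable subcollection $\{N_n\}_{n\in\omega}$ of pairwise distinct members, and I write the $G_\delta$‑set as $G=\bigcap_{n\in\omega}G_n$ with each $G_n$ open and $p\in G_n$, replacing $G_n$ by $G_1\cap\dots\cap G_n$ so that the sequence decreases. The first point is that HCP is extremely rigid on singletons: if $a_n\in N_n$ are chosen and we put $B_n=\{a_n\}$ (and, for subcollections, $B_n=\{a_n\}$ or $\varnothing$), then $\bigcup_n\mathsf{Cl}(B_n)=\mathsf{Cl}(\bigcup_n B_n)$ forces every subset of $\{a_n:n\in\omega\}$ to be closed, i.e. \emph{any transversal $\{a_n:n\in\omega\}$ of $\{N_n\}$ is closed and discrete in $X$}. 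Thus the entire problem reduces to producing a transversal that \emph{cannot} be closed discrete.

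To build the candidate transversal I use that $p$ is a limit point of $A$ and that $N_n\cap G_n$ is a neighbourhood of $p$, so I may choose $a_n\in N_n\cap G_n\cap(A\setminus\{p\})$ for each $n$. Two properties are immediate. The values $a_n$ are infinitely many: if a single value were attained along an infinite index set $I$, that value would lie in $G_n$ for all $n\in I$, hence, by monotonicity, in $\bigcap_n G_n=G$, contradicting $G\cap(A\setminus\{p\})=\varnothing$. Moreover, since the $G_n$ decrease and $a_n\in G_n$, for every $m$ we have $\{a_n:n\ge m\}\subseteq G_m$, so the transversal concentrates toward $G$ and its only admissible accumulation behaviour is at $p$. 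If I can show $p\in\mathsf{Cl}(\{a_n:n\in\omega\})$ the proof is finished: as $a_n\in A\setminus\{p\}$ we have $p\neq a_n$ for all $n$, so $p\in\mathsf{Cl}(\{a_n\})\setminus\{a_n\}$ contradicts the closed‑discreteness forced above, and therefore $\mathcal H$ must be finite.

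The step I expect to be the main obstacle is exactly establishing this accumulation, i.e. that the selection genuinely reaches $p$, and this is precisely where the $G_\delta$‑separation is indispensable: without it the conclusion fails (in $\omega_1+1$ the point $\omega_1$ is a limit of the full space yet no countable selection accumulates there, and indeed no $G_\delta$ separates $\omega_1$ from the countable ordinals). Concretely I would first extract from the $G_\delta$‑separation together with the limit‑point property a \emph{countable} set $S_0\subseteq A\setminus\{p\}$ with $p\in\mathsf{Cl}(S_0)$ — so that $p$ is reachable by a countable set — and then thread $S_0$ through the infinitely many members: since each $N_n$ is a neighbourhood of $p$, it meets $S_0$ in an infinite set, which is what makes it possible to write $S_0=\bigcup_n F_n$ with each $F_n\subseteq N_n\cap S_0$ \emph{finite} (for instance via an injective system of representatives, whose fibres are automatically finite). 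Applying HCP to the finite, hence closed, sets $F_n$ gives $\mathsf{Cl}(\bigcup_n F_n)=\bigcup_n\mathsf{Cl}(F_n)=S_0$, contradicting $p\in\mathsf{Cl}(S_0)\setminus S_0$. The delicate work, and the real content of the lemma, lies in deducing the countable‑reachability of $p$ from the $G_\delta$‑separation and in verifying that the finite‑fibre threading can always be arranged; the closed‑discreteness of transversals under HCP is then what turns this configuration into the desired contradiction.
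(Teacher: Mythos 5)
This proposition is quoted by the paper from \cite{burke_eng} (Lemma~4 there) without proof, so your attempt can only be measured against the statement itself; and there it has a fatal gap. The step everything hinges on --- extracting a \emph{countable} set $S_0\subseteq A\setminus\{p\}$ with $p\in\mathsf{Cl}(S_0)$ from the $G_\delta$-separation and the limit-point hypothesis --- is simply false. Take $X=\{p\}\cup(\omega\times\omega_1)$ with every point of $\omega\times\omega_1$ isolated and with basic neighbourhoods of $p$ of the form $U_{m,g}=\{p\}\cup\{(n,\alpha)\colon n\ge m,\ \alpha\ge g(n)\}$ for $m\in\omega$ and $g\colon\omega\to\omega_1$. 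This is a $T_1$ topology; $p$ is a limit point of $A=\omega\times\omega_1$ since $(m,g(m))\in U_{m,g}$; and $\{p\}=\bigcap_{m\in\omega}U_{m,\bar0}$ is a $G_\delta$ disjoint from $A\setminus\{p\}$. Yet for any countable $S_0\subseteq A$ the ordinal $g(n)=\sup\{\alpha+1\colon(n,\alpha)\in S_0\}$ is below $\omega_1$, and $U_{0,g}$ is a neighbourhood of $p$ missing $S_0$; so no countable subset of $A$ reaches $p$, although the lemma does hold in this space. The same example kills your first route as well: a transversal $\{a_n\colon n\in\omega\}$ is countable, so $p\in\mathsf{Cl}(\{a_n\colon n\in\omega\})$ can never be forced. (The ``finite-fibre threading'' is an additional, independent problem --- nothing prevents every point of $S_0$ from lying in $N_1$ and in no other $N_n$ --- but the failure of countable reachability is already decisive: any strategy that only ever assigns finite, or countable discrete, sets to the members of the family cannot prove this lemma.)

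What the argument actually requires is to assign \emph{large} subsets to the $N_n$ and telescope. Write $G=\bigcap_{m\ge1}G_m$ with $G_m$ open and decreasing, let $V_1,V_2,\dots$ be distinct members of the HCP family, and put $S_m=\mathsf{Int}(V_1)\cap\dots\cap\mathsf{Int}(V_m)\cap G_m\cap(A\setminus\{p\})$. Each $S_m$ has $p$ in its closure (it is the trace of $A\setminus\{p\}$ on an open neighbourhood of a limit point of $A$), the $S_m$ decrease, and $\bigcap_m S_m\subseteq G\cap(A\setminus\{p\})=\varnothing$, so $S_1=\bigcup_m(S_m\setminus S_{m+1})$. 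Assign $C_m=S_m\setminus S_{m+1}\subseteq V_m$; HCP yields $p\in\mathsf{Cl}(S_1)=\bigcup_m\mathsf{Cl}(C_m)$, hence $p\in\mathsf{Cl}(C_{m_0})$ for some $m_0$, while $C_{m_0}$ is disjoint from the neighbourhood $\mathsf{Int}(V_{m_0+1})\cap G_{m_0+1}$ of $p$ --- a contradiction. Your opening observations (closed discreteness of transversals, infinitude of the values $a_n$) are correct but, as you yourself suspected, they do not lead to the conclusion; the missing idea is this descending chain of set-sized assignments rather than a pointwise selection.
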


\begin{prop} [Remark after Theorem 1 in \cite{miz}]
\label{miz_theorem}
    Open perfect maps preserve submetrizability. 
\end{prop}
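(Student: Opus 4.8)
The plan is to exhibit an explicit weaker metric on $Y$ built from a weaker metric on $X$ and the compact fibers of $f$. Fix a metric $d$ on $X$ whose topology $\tau_d$ satisfies $\tau_d \subseteq \tau(X)$ and is metrizable; such a $d$ exists because $X$ is submetrizable, and it is continuous as a map $X \times X \to \mathbb{R}$ with respect to $\tau(X)$. Since $f$ is perfect, each fiber $f^{-1}(y)$ is compact in $\tau(X)$, hence $d$-compact and $d$-bounded, so the Hausdorff distance $d_H$ induced by $d$ is defined and finite on fibers. Define $\rho \colon Y \times Y \to [0,\infty)$ by $\rho(y_1,y_2) = d_H\big(f^{-1}(y_1), f^{-1}(y_2)\big)$. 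I would then show that $\rho$ is a metric on $Y$ and that it is $\tau(Y)$-continuous; the topology it generates is then a weaker metrizable topology on $Y$, which is exactly submetrizability.

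That $\rho$ is a metric follows from $d_H$ being a metric on the hyperspace of nonempty $d$-compact sets: symmetry and the triangle inequality are inherited, and $\rho(y_1,y_2)=0$ forces $f^{-1}(y_1)=f^{-1}(y_2)$, which for a surjection means $y_1=y_2$, since distinct fibers are disjoint nonempty compacta and such pairs have strictly positive Hausdorff distance. The real content is continuity, which is equivalent to continuity of the fiber map $\Phi \colon \langle Y, \tau(Y)\rangle \to \langle \exp_d(X), d_H\rangle$, $y \mapsto f^{-1}(y)$, because $\rho = d_H \circ (\Phi \times \Phi)$.

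To prove $\Phi$ continuous I would fix $y$ and $\varepsilon>0$ and bound the two halves of $d_H$ separately. For the \emph{upper} half I use that $f$ is closed: the set $U=\{x : d(x,f^{-1}(y))<\varepsilon\}$ is $\tau(X)$-open and contains $f^{-1}(y)$, so $V_1 = Y\setminus f[X\setminus U]$ is an open neighbourhood of $y$ with $f^{-1}(y')\subseteq U$ for all $y'\in V_1$. For the \emph{lower} half I use that $f$ is open together with compactness of the fiber: cover $f^{-1}(y)$ by finitely many balls $B_d(x_i,\varepsilon/2)$ with $x_i\in f^{-1}(y)$, and set $V_2 = \bigcap_i f[B_d(x_i,\varepsilon/2)]$, an open neighbourhood of $y$. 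For $y'\in V_2$ each $x\in f^{-1}(y)$ lies in some $B_d(x_i,\varepsilon/2)$, which also contains a point of $f^{-1}(y')$ because $y'\in f[B_d(x_i,\varepsilon/2)]$; the triangle inequality then gives $d(x,f^{-1}(y'))<\varepsilon$. On $V_1\cap V_2$ both suprema are at most $\varepsilon$, so $\rho(y,y')\le\varepsilon$.

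I expect the continuity of $\Phi$ to be the main obstacle, and it is precisely where the two hypotheses on $f$ pull their weight: closedness together with compact fibers delivers upper semicontinuity of the fiber map, while openness delivers lower semicontinuity, and only their combination makes $y\mapsto f^{-1}(y)$ continuous into the Hausdorff-metric hyperspace. Once $\rho$ is a $\tau(Y)$-continuous metric, every ball $B_\rho(y,r)$ is $\tau(Y)$-open, so the metric topology $\tau_\rho$ satisfies $\tau_\rho\subseteq\tau(Y)$; hence $\langle Y, \tau_\rho\rangle$ is a weaker metrizable topology and $Y$ is submetrizable.
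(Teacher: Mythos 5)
Your proof is correct, but note that the paper does not prove this proposition at all: it is imported verbatim as a known result (the remark after Theorem~1 in Mizokami's paper), so there is no in-paper argument to compare against. Your route is the natural direct one: a weaker metric $d$ on $X$ makes each fiber a nonempty $d$-compact set, the fiber map $\Phi\colon y\mapsto f^{-1}(y)$ lands injectively in the hyperspace of $d$-compacta with the Hausdorff metric, and $\rho=d_H\circ(\Phi\times\Phi)$ is a weaker metric on $Y$. Every step checks: closedness of $f$ gives upper semicontinuity of $\Phi$ via the saturation $Y\setminus f[X\setminus U]$, openness plus compactness of the fiber gives lower semicontinuity via the finite subcover of $\varepsilon/2$-balls, injectivity of $\Phi$ follows because distinct fibers are disjoint closed sets, and $\tau(Y)$-continuity of $\rho$ makes every $\rho$-ball open, so $\tau_\rho\subseteq\tau(Y)$. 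This is cleanly decoupled: perfectness alone yields the upper half, openness alone the lower half, and only together do they give continuity into the Hausdorff-metric hyperspace. Mizokami's own derivation goes through his covering characterization of submetrizability (in the spirit of the Chaber theorems quoted in the introduction, via sequences of covers with $\bigcap_n\mathsf{St}(y,V_n)=\{y\}$), which is better adapted to tracking separability and to weakening ``perfect'' to ``compact''; your hyperspace argument is more elementary and self-contained for the open perfect case. One cosmetic remark: you do not need joint continuity of $d$ on $X\times X$, only that $d$-balls and $d$-neighbourhoods of sets are $\tau(X)$-open, which is immediate from $\tau_d\subseteq\tau(X)$.
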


\section{Technical lemmas}

\begin{lemm} \label{lemma_1}
    Let $X$ be submetrizable and let $F$ be a relatively pseudocompact subspace of $X$. Then $\forall \sigma_1, \sigma_2 \in \tau(X)\downarrow_{metr}[\mathsf{Cl}_{\sigma_1}(F) = \mathsf{Cl}_{\sigma_2}(F)]$.
\end{lemm}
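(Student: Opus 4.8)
The plan is to reduce the general statement to the case of two \emph{comparable} topologies, and then to contradict relative pseudocompactness by manufacturing an unbounded continuous function. First I would observe that it suffices to prove the following special case: if $\sigma \subseteq \sigma'$ are both in $\tau(X)\downarrow_{metr}$, then $\mathsf{Cl}_\sigma(F) = \mathsf{Cl}_{\sigma'}(F)$. Indeed, given arbitrary $\sigma_1, \sigma_2 \in \tau(X)\downarrow_{metr}$ with compatible metrics $d_1, d_2$, the metric $d_1 + d_2$ generates the supremum topology $\sigma_1 \vee \sigma_2$, which is therefore metrizable and is still contained in $\tau(X)$ (since $\tau(X)$ is a topology containing $\sigma_1 \cup \sigma_2$), so $\sigma_1 \vee \sigma_2 \in \tau(X)\downarrow_{metr}$. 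Applying the special case to the pairs $\sigma_1 \subseteq \sigma_1 \vee \sigma_2$ and $\sigma_2 \subseteq \sigma_1 \vee \sigma_2$ would then give $\mathsf{Cl}_{\sigma_1}(F) = \mathsf{Cl}_{\sigma_1 \vee \sigma_2}(F) = \mathsf{Cl}_{\sigma_2}(F)$.

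For the special case, one inclusion is immediate: since $\sigma'$ is finer than $\sigma$, closures taken in $\sigma'$ are contained in closures taken in $\sigma$, so $\mathsf{Cl}_{\sigma'}(F) \subseteq \mathsf{Cl}_\sigma(F)$. For the reverse inclusion I would argue by contradiction, assuming there is a point $p \in \mathsf{Cl}_\sigma(F) \setminus \mathsf{Cl}_{\sigma'}(F)$. Since $(X, \sigma)$ is metrizable, $p$ is the $\sigma$-limit of a sequence $\langle x_n \rangle$ of points of $F$; as $p \notin F$ (otherwise $p \in \mathsf{Cl}_{\sigma'}(F)$ trivially), all $x_n$ differ from $p$, so I may pass to a subsequence of distinct points and take $D := \{x_n : n \in \omega\}$ to be infinite.

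The crux is to show that $D$ is a \emph{closed discrete} subspace of $(X, \sigma')$. Every $\sigma'$-accumulation point of $D$ is also a $\sigma$-accumulation point of $D$ (because $\sigma \subseteq \sigma'$), and in the metric space $(X, \sigma)$ the only accumulation point of the convergent sequence of distinct points $D$ is $p$; but $p \notin \mathsf{Cl}_{\sigma'}(F) \supseteq \mathsf{Cl}_{\sigma'}(D)$, so $p$ is not a $\sigma'$-accumulation point of $D$ either. Hence $D$ has no accumulation point in $(X, \sigma')$ and is closed and discrete there. Since $(X, \sigma')$ is metrizable, hence normal, the Tietze extension theorem lets me extend the (trivially continuous) assignment $x_n \mapsto n$ on $D$ to a continuous function $h \colon (X, \sigma') \to \mathbb{R}$. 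Because $\sigma' \subseteq \tau(X)$, the map $h$ is also $\tau(X)$-continuous, i.e.\ $h \in \mathsf{C}(X)$, and yet $h \upharpoonright F$ is unbounded since $h(x_n) = n$. This contradicts the relative pseudocompactness of $F$ and finishes the argument.

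I expect the main obstacle to be exactly that middle step: verifying that a sequence which converges to $p$ in the coarser topology $\sigma$ becomes closed and discrete in the finer topology $\sigma'$. The mechanism is that its would-be limit $p$ escapes the $\sigma'$-closure of $F$, so the sequence loses its only candidate accumulation point, and it is precisely this that converts relative pseudocompactness into a usable unbounded-function obstruction. The reduction via $d_1 + d_2$ and the easy finer-closure inclusion are routine by comparison.
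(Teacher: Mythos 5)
Your argument is correct, and its engine is the same as the paper's: a sequence of distinct points of $F$ converging in one metrizable subtopology to a point that escapes the closure of $F$ in the other becomes closed and discrete in a common metrizable refinement, and then Tietze in that refinement (whose topology is coarser than $\tau(X)$) yields a function in $\mathsf{C}(X)$ unbounded on $F$, contradicting relative pseudocompactness. Where you genuinely differ is in how the common refinement is produced. The paper invokes Oka's lemma (Proposition~\ref{Oka_lemma}) to obtain some $\sigma \in \tau(X)\downarrow_{metr}$ containing $\sigma_1$ together with a single $\sigma_2$-open set $U$ separating the chosen point from $F$; you instead observe that $d_1+d_2$ metrizes the join $\sigma_1 \vee \sigma_2$, which is automatically contained in $\tau(X)$, and reduce everything to the case of comparable topologies. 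Your route is more elementary and self-contained for this particular lemma (nothing beyond normality of metric spaces is used), while the paper's choice keeps the argument uniform with its other proofs, where Oka's lemma is genuinely needed to adjoin $\sigma$-discrete families of cozero sets that do not all come from one metrizable subtopology. The only steps in your version that merit checking --- that the sum metric induces exactly the join topology, and that the join is still coarser than $\tau(X)$ --- are routine and you handle them correctly, as you do the extraction of a subsequence of distinct points and the verification that the set has no $\sigma'$-accumulation point.
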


\begin{proof}
    Assume the converse, suppose that there exist $\sigma_1, \sigma_2 \in\tau(X)\downarrow_{metr}$ such that $\mathsf{Cl}_{\sigma_1}(F) \neq \mathsf{Cl}_{\sigma_2}(F)$. We can assume without loss of generality that $\mathsf{Cl}_{\sigma_1}(F) \setminus \mathsf{Cl}_{\sigma_2}(F) \neq \varnothing$. Fix $x \in \mathsf{Cl}_{\sigma_1}(F) \setminus \mathsf{Cl}_{\sigma_2}(F)$. Since $x \nin F$, we see that there exists a nontrivial $\langle x_n \rangle_{n \in \omega} \in {}^\omega F$ such that 
    \begin{equation} \label{lemma_1_1}
        \langle x_n \rangle_{n \in \omega} \xrightarrow{\langle X, \sigma_1 \rangle} x.
    \end{equation} 
    Also take $U \in \sigma_2(x)$ such that 
    \begin{equation} \label{lemma_1_2}
        U \cap F = \varnothing.
    \end{equation}
    From Proposition \ref{Oka_lemma} it follows that there exists a $\sigma \in \tau(X)\downarrow_{metr}$ such that $\{U\}\cup\sigma_1 \subseteq \sigma$. Then from (\ref{lemma_1_1}) and (\ref{lemma_1_2}) we conclude that $\{x_n \colon n \in \omega\}$ is a closed discrete subset of $\langle X,\sigma \rangle$, and so there exists $f \in \mathsf{C}(\langle X, \sigma\rangle) \subseteq \mathsf{C}(\langle X, \tau(X)\rangle)$ such that $f \upharpoonright F$ is unbounded. This contradiction proves the lemma.
\end{proof}

\begin{lemm} \label{criter_for_rel_pseud}
    Let $X$ be submetrizable and let $F$ be a subset of $X$. Then $F$ is relatively pseudocompact if and only if $\forall \sigma \in \tau(X)\downarrow_{metr}[\mathsf{Cl}_{\sigma}(F) \text{ is a compact subset of }\\ \langle X, \sigma\rangle]$.
\end{lemm}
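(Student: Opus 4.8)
The plan is to prove the two implications separately, in each of them isolating a single continuous real-valued function. Throughout I write $C_\sigma$ for $\mathsf{Cl}_\sigma(F)$, and I use that relative pseudocompactness refers to $\mathsf{C}(X) = \mathsf{C}(\langle X, \tau(X)\rangle)$ while any $\sigma \subseteq \tau(X)$ satisfies $\mathsf{C}(\langle X, \sigma\rangle) \subseteq \mathsf{C}(X)$. For the implication that relative pseudocompactness of $F$ forces every $C_\sigma$ to be compact, I argue by contradiction. Fix $\sigma \in \tau(X)\downarrow_{metr}$ with a compatible metric and suppose $C_\sigma$ is not compact. Since $\langle X, \sigma\rangle$ is metrizable, compactness of $C_\sigma$ is equivalent to countable compactness, so $C_\sigma$ contains a countably infinite subset $D$ with no accumulation point in $C_\sigma$; as $C_\sigma$ is $\sigma$-closed, $D$ is closed and discrete in $\langle X, \sigma\rangle$. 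Metrizable spaces are normal, so fixing an enumeration $D = \{x_n\ \colon\ n \in \omega\}$ the map $x_n \mapsto n$ (continuous on the discrete closed set $D$) extends by the Tietze theorem to some $g \in \mathsf{C}(\langle X, \sigma\rangle)$. Then $g$ is unbounded on $C_\sigma$; since $F$ is $\sigma$-dense in $C_\sigma = \mathsf{Cl}_\sigma(F)$ and $g$ is $\sigma$-continuous, $g[C_\sigma] \subseteq \mathsf{Cl}_{\mathbb{R}}(g[F])$, so $g$ is unbounded on $F$ as well. But $g \in \mathsf{C}(\langle X, \sigma\rangle) \subseteq \mathsf{C}(X)$, contradicting the relative pseudocompactness of $F$.

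For the converse I would take an arbitrary $g \in \mathsf{C}(X)$ and manufacture a metrizable weaker topology in which $g$ remains continuous. Fix any $\sigma_0 \in \tau(X)\downarrow_{metr}$ with metric $d_0$ and set $\sigma \coloneq \sigma(\sigma_0, g)$, the topology from our main construction with $g$ regarded as a map into $\mathbb{R}$. Then $\sigma \subseteq \tau(X)$, because $\sigma_0 \subseteq \tau(X)$ and $g$ is $\tau(X)$-continuous, and $\langle X, \sigma\rangle$ is metrized by $d(x,y) \coloneq d_0(x,y) + |g(x) - g(y)|$ (equivalently, $x \mapsto (x, g(x))$ embeds $\langle X, \sigma\rangle$ into the metrizable product $\langle X, \sigma_0\rangle \times \mathbb{R}$). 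Hence $\sigma \in \tau(X)\downarrow_{metr}$, so by hypothesis $C_\sigma$ is $\sigma$-compact; since $g$ is $\sigma$-continuous by construction, $g[C_\sigma]$ is a compact, hence bounded, subset of $\mathbb{R}$, and $F \subseteq C_\sigma$ yields that $g \upharpoonright F$ is bounded. As $g$ was arbitrary, $F$ is relatively pseudocompact.

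The main obstacle lives in the first implication: the infinite set $D$ witnessing non-compactness need not be uniformly discrete (it may be the range of a non-convergent Cauchy sequence), so a direct ``tent-function'' recipe for an unbounded $g$ fails; one must instead exploit normality of the metric topology through Tietze extension and then use the density of $F$ in $C_\sigma$ to transport the unboundedness of $g$ from the closure down to $F$. In the converse the only delicate point is verifying that $\sigma(\sigma_0, g)$ is at once coarser than $\tau(X)$ and metrizable, which I would settle by exhibiting the explicit metric $d_0 + |g(\cdot) - g(\cdot)|$ and noting that it separates points precisely because $d_0$ already does. Neither Proposition~\ref{Oka_lemma} nor Lemma~\ref{lemma_1} is needed for this argument, although Lemma~\ref{lemma_1} does tell us, a posteriori, that the underlying set $C_\sigma$ is independent of $\sigma$.
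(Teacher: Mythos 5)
Your proof is correct, and while the forward implication matches the paper's argument (the paper simply asserts the existence of an $f \in \mathsf{C}(\langle X,\sigma\rangle)$ unbounded on $\mathsf{Cl}_\sigma(F)$, which you justify in full via a closed discrete set and Tietze extension, and then both arguments transport unboundedness to $F$ by density), your converse takes a genuinely different route. The paper takes an unbounded $f \in \mathsf{C}(X)$, forms the countable cozero collection $\{f^{-1}[(a,b)]\ \colon\ a<b\in\mathbb{Q}\}$, and invokes Proposition~\ref{Oka_lemma} to produce a $\sigma \in \tau(X)\downarrow_{metr}$ containing this collection, so that $f$ is $\sigma$-continuous and contradicts compactness of $\mathsf{Cl}_\sigma(F)$. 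You instead build the weaker metrizable topology $\sigma(\sigma_0, g)$ by hand and exhibit the explicit compatible metric $d_0(x,y) + |g(x)-g(y)|$ (equivalently, the graph embedding into $\langle X,\sigma_0\rangle \times \mathbb{R}$), which is correct: the topology is coarser than $\tau(X)$ because $\sigma_0 \subseteq \tau(X)$ and $g$ is $\tau(X)$-continuous, and it is metrizable because adjoining the initial topology of a single real-valued function to a metrizable topology preserves metrizability. The core idea is the same in both cases --- manufacture a member of $\tau(X)\downarrow_{metr}$ in which the given unbounded (or arbitrary) function remains continuous --- but your version is more elementary and self-contained, avoiding the $\sigma$-discrete machinery of Oka's lemma entirely, whereas the paper's version fits into its uniform pattern of citing Proposition~\ref{Oka_lemma} whenever a larger metrizable subtopology is needed. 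Your closing remark that Lemma~\ref{lemma_1} is not needed here is also accurate; the paper does not use it in this proof either.
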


\begin{proof}
    Let $F$ be a relatively pseudocompact subset of $X$. Assume the converse, suppose that there exists $\sigma \in \tau(X)\downarrow_{metr}$ such that $\mathsf{Cl}_{\sigma}(F)$ is not compact in $\langle X, \sigma\rangle$. Then there exists $f \in \mathsf{C}(\langle X, \sigma\rangle)$ such that $f \upharpoonright \mathsf{Cl}_{\sigma}(F)$ is unbounded. Since $F$ is dense in $\langle \mathsf{Cl}_{\sigma}(F), \sigma \upharpoonright \mathsf{Cl}_{\sigma}(F)\rangle$, we see that $f \upharpoonright F$ is also unbounded, but $f \in \mathsf{C}(\langle X, \tau(X)\rangle)$, a contradiction. 

    Now suppose that $\forall \sigma \in \tau(X)\downarrow_{metr}[\mathsf{Cl}_{\sigma}(F) \text{ is a compact subset of } \langle X, \sigma\rangle]$. Assume the converse, suppose that there exists $f \in \mathsf{C}(\langle X, \tau(X)\rangle)$ such that $f \upharpoonright F$ is unbounded. Let $\mathfrak{U} \coloneq \{f^{-1}[(a,b)] \colon a < b \in \mathbb{Q}\}$. From Proposition \ref{Oka_lemma} it follows that there exists $\sigma \in \tau(X)\downarrow_{metr}$ such that $\mathfrak{U} \subseteq \sigma$, and so $f \in \mathsf{C}(\langle X, \sigma\rangle)$, but it contradicts the fact that $\mathsf{Cl}_{\sigma}(F) \text{ is a compact subset of } \langle X, \sigma\rangle$.
\end{proof}

\begin{lemm} \label{Utv_4}
    Let $f\colon X \to Y$ be a map, $X$ submetrizable, $Y$ functionally Hausdorff, and $y \in Y$ such that $\forall \sigma_1, \sigma_2 \in \tau(X)\downarrow_{metr}[\mathsf{Cl}_{\sigma_1}(\partial_{\tau(X)} f^{-1}(y)) = \mathsf{Cl}_{\sigma_2}(\partial_{\tau(X)} f^{-1}(y))]$. Then $\forall \sigma \in \tau(X)\downarrow_{metr}[\mathsf{Cl}_{\sigma}(\partial_{\tau(X)} f^{-1}(y)) \subseteq f^{-1}(y)]$.
\end{lemm}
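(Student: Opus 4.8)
The plan is to argue by contradiction, using the functional Hausdorffness of $Y$ to produce a continuous function separating a putative bad point from the fibre, and then to pull that function into a metrizable weakening of $\tau(X)$ via Oka's lemma (Proposition \ref{Oka_lemma}). Throughout, write $B \coloneq \partial_{\tau(X)} f^{-1}(y)$. First I would record the elementary observation that, since $Y$ is $T_1$ and $f$ is continuous, the fibre $f^{-1}(y)$ is $\tau(X)$-closed, so $B \subseteq \mathsf{Cl}_{\tau(X)}(f^{-1}(y)) = f^{-1}(y)$.

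Now suppose the conclusion fails, so that there are $\sigma \in \tau(X)\downarrow_{metr}$ and $x \in \mathsf{Cl}_{\sigma}(B)$ with $f(x) \neq y$. Since $Y$ is functionally Hausdorff, fix $g \in \mathsf{C}(Y)$ with $0 \leq g \leq 1$, $g(f(x)) = 0$, and $g(y) = 1$, and set $h \coloneq g \circ f \colon X \to \mathbb{I}$. Then $h$ is $\tau(X)$-continuous, $h(x) = 0$, and $h \equiv 1$ on $f^{-1}(y)$, hence on $B$.

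The key step, which I expect to be the main obstacle, is to manufacture a metrizable topology in which $h$ remains continuous. For this I would form the countable family of cozero sets $\mathfrak{U} \coloneq \{h^{-1}[(a,b)] \colon a < b \in \mathbb{Q}\}$; being countable, $\mathfrak{U}$ is $\sigma$-discrete, so Proposition \ref{Oka_lemma} yields a metric space $M$ and a one-to-one continuous map $\phi \colon X \to M$ with $\phi[U]$ open in $M$ for every $U \in \mathfrak{U}$. Letting $\sigma' \coloneq \{\phi^{-1}[V] \colon V \in \tau(M)\}$ be the initial topology of $\phi$, injectivity of $\phi$ gives $h^{-1}[(a,b)] = \phi^{-1}\big[\phi[h^{-1}[(a,b)]]\big] \in \sigma'$ for all rational $a < b$; hence $\sigma' \in \tau(X)\downarrow_{metr}$ and $h$ is $\sigma'$-continuous. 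This mirrors the use of Oka's lemma in the proof of Lemma \ref{criter_for_rel_pseud}.

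Finally I would close the loop. Since $h$ is $\sigma'$-continuous and $\{1\}$ is closed in $\mathbb{I}$, the set $h^{-1}[\{1\}]$ is $\sigma'$-closed and contains $B$, whence $\mathsf{Cl}_{\sigma'}(B) \subseteq h^{-1}[\{1\}]$. The hypothesis on $y$ forces $\mathsf{Cl}_{\sigma}(B) = \mathsf{Cl}_{\sigma'}(B)$, so $x \in \mathsf{Cl}_{\sigma}(B) \subseteq h^{-1}[\{1\}]$, i.e.\ $h(x) = 1$. This contradicts $h(x) = 0$ and proves the lemma.
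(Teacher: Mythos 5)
Your argument is correct and follows essentially the same route as the paper's proof: use functional Hausdorffness to separate $f(x)$ from $y$, pull the separation back to a cozero structure on $X$, invoke Oka's lemma (Proposition \ref{Oka_lemma}) to obtain a metrizable subtopology in which $x$ is excluded from the closure of $\partial_{\tau(X)}f^{-1}(y)$, and then apply the equal-closures hypothesis. The only cosmetic difference is that the paper works directly with the single cozero set $f^{-1}[U]$ rather than carrying the separating function $h$ and its rational-interval preimages.
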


\begin{proof}
    We prove that 
    \begin{equation} \label{approx_by_metr_subtop}
        \forall x \in X \setminus f^{-1}(y)\ \exists \gamma \in \tau(X)\downarrow_{metr} [x \nin \mathsf{Cl}_{\gamma}(\partial_{\tau(X)} f^{-1}(y))].
    \end{equation}

    Let $x \in X \setminus f^{-1}(y)$, then $f(x) \neq y$. Since $Y$ is functionally Hausdorff, then there exists $U \in \mathsf{coz}(Y)$ such that $f(x) \in U$ and $y \nin U$. Now we see that
    $$
        x \in f^{-1}[U];
    $$
    $$
        f^{-1}(y) \cap f^{-1}[U] = \varnothing;
    $$
    $$
        f^{-1}[U] \in \mathsf{coz}(X).
    $$
    Then from Proposition \ref{Oka_lemma} it follows that there exists $\gamma \in \tau(X)\downarrow_{metr}$ such that $x \nin \mathsf{Cl}_{\gamma}(\partial_{\tau(X)} f^{-1}(y))$. Now the statement of the Lemma follows from (\ref{approx_by_metr_subtop}).
\end{proof}

\begin{lemm} \label{Utv_2}
    Let $f\colon X \to Y$ be a map, $y \in Y$, and $\tau \in \tau(X)\downarrow$. Then $\mathsf{Cl}_{\tau}(\partial_{\sigma(\tau, f)}f^{-1}(y)) \cap f^{-1}(y) = \partial_{\sigma(\tau, f)}f^{-1}(y)$.
\end{lemm}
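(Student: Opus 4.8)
The plan is to abbreviate $A \coloneq f^{-1}(y)$ and $\sigma \coloneq \sigma(\tau, f)$ and to prove the two inclusions separately. The first thing I would record is that $A$ is $\sigma$-closed: since all spaces are $T_1$, the singleton $\{y\}$ is closed, so $Y \setminus \{y\} \in \tau(Y)$ and hence $f^{-1}[Y \setminus \{y\}] = X \setminus A$ is a subbasic member of $\sigma$; thus $A$ is the complement of a $\sigma$-open set. Consequently $\mathsf{Cl}_{\sigma}(A) = A$, and therefore
$$
    \partial_{\sigma} A = \mathsf{Cl}_{\sigma}(A) \cap \mathsf{Cl}_{\sigma}(X \setminus A) = A \cap \mathsf{Cl}_{\sigma}(X \setminus A) \subseteq A.
$$
This already yields the inclusion ``$\supseteq$'': every set lies in its own $\tau$-closure, so $\partial_{\sigma} A \subseteq \mathsf{Cl}_{\tau}(\partial_{\sigma} A)$, and combined with $\partial_{\sigma} A \subseteq A$ this gives $\partial_{\sigma} A \subseteq \mathsf{Cl}_{\tau}(\partial_{\sigma} A) \cap A$.

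For the reverse inclusion I would fix $x \in \mathsf{Cl}_{\tau}(\partial_{\sigma} A) \cap A$ and aim to show $x \in \partial_{\sigma} A$. Since $x \in A$ and $\partial_{\sigma} A = A \cap \mathsf{Cl}_{\sigma}(X \setminus A)$, it suffices to prove $x \in \mathsf{Cl}_{\sigma}(X \setminus A)$. So I would take an arbitrary basic $\sigma$-neighbourhood $W$ of $x$ and show that it meets $X \setminus A$. Because $\sigma$ is generated by the subbase $\tau \cup \{f^{-1}[U]\ \colon\ U \in \tau(Y)\}$ and both $\tau$ and $\tau(Y)$ are closed under finite intersections, $W$ can be written as $V \cap f^{-1}[U]$ with $V \in \tau$, $U \in \tau(Y)$, $x \in V$, and $y = f(x) \in U$.

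The heart of the argument is then the following passage. Since $x \in \mathsf{Cl}_{\tau}(\partial_{\sigma} A)$ and $V$ is a $\tau$-open neighbourhood of $x$, there exists a point $z \in V \cap \partial_{\sigma} A$. Because $z \in \partial_{\sigma} A \subseteq A$, we have $f(z) = y \in U$, so $z \in f^{-1}[U]$, and hence $z \in V \cap f^{-1}[U] = W$. But $z$ also lies in $\mathsf{Cl}_{\sigma}(X \setminus A)$, and $W$ is a $\sigma$-open neighbourhood of $z$; therefore $W \cap (X \setminus A) \neq \varnothing$. As $W$ was an arbitrary basic $\sigma$-neighbourhood of $x$, this gives $x \in \mathsf{Cl}_{\sigma}(X \setminus A)$, completing the second inclusion.

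The step I expect to require the most care is exactly this passage from a $\tau$-neighbourhood of $x$ to a $\sigma$-neighbourhood of the witness $z$: it works precisely because every point of $A$ is sent by $f$ to $y$, so the ``vertical'' factor $f^{-1}[U]$ is automatically satisfied at any point of $\partial_{\sigma} A$ as soon as $y \in U$. The only bookkeeping to make precise is the reduction of a general basic $\sigma$-open set to the single form $V \cap f^{-1}[U]$, together with the $T_1$-based $\sigma$-closedness of $A$; neither is deep, and the lemma is in essence a clean unwinding of the definitions once one notices that $A$ is $\sigma$-closed.
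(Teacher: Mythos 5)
Your proof is correct and follows essentially the same route as the paper's: both reduce a basic $\sigma(\tau,f)$-neighbourhood of $x$ to the form $V \cap f^{-1}[U]$ and exploit that $\partial_{\sigma(\tau,f)}f^{-1}(y) \subseteq f^{-1}(y)$ makes the factor $f^{-1}[U]$ automatic. The only (harmless) difference is at the end, where the paper concludes $x \in \mathsf{Cl}_{\sigma(\tau,f)}(\partial_{\sigma(\tau,f)}f^{-1}(y)) = \partial_{\sigma(\tau,f)}f^{-1}(y)$ directly, while you pass through the witness $z$ to land in $\mathsf{Cl}_{\sigma(\tau,f)}(X \setminus f^{-1}(y))$; your write-up is if anything more explicit about the trivial inclusion and the $T_1$-based $\sigma$-closedness of the fiber.
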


\begin{proof}
    Let $x \in \mathsf{Cl}_{\tau}(\partial_{\sigma(\tau, f)}f^{-1}(y)) \cap f^{-1}(y)$, and let $W$ be an open neighbourhood of $x$ in $\langle X, \sigma(\tau, f) \rangle$. Without loss of generality, we can assume that $W = U \cap f^{-1}[V]$, where $U \in \tau(x)$ and V is an open neighbourhood of $y$. Then 
    $$
        W \cap \partial_{\sigma(\tau, f)}f^{-1}(y) = U \cap \partial_{\sigma(\tau, f)}f^{-1}(y) \neq \varnothing.
    $$
    And so $x \in \partial_{\sigma(\tau, f)}f^{-1}(y)$.
\end{proof}

\begin{lemm} \label{bound_is_comp}
    Let $f\colon X \to Y$ be a closed map, $X$ submetrizable, $y$ a q-point in $Y$, and $Y$ functionally Hausdorff. Then $\forall \tau \in \tau({X}) \downarrow_{metr} [\partial_{\sigma(\tau, f)}f^{-1}(y) \text{ is a compact subset of }\\ \langle X, \sigma(\tau, f) \rangle]$. 
\end{lemm}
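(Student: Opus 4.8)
The plan is to apply the technical machinery not to the original map, but to $f$ regarded as a map on the \emph{coarsened} domain $\langle X, \sigma(\tau,f)\rangle$. Denote $C = \partial_{\sigma(\tau,f)} f^{-1}(y)$. Three observations make this reduction legitimate. First, $f\colon \langle X, \sigma(\tau,f)\rangle \to Y$ is still closed: since every subbasic generator of $\sigma(\tau,f)$ lies in $\tau(X)$ we have $\sigma(\tau,f) \subseteq \tau(X)$, so every $\sigma(\tau,f)$-closed set is $\tau(X)$-closed, and as $f$ is closed on $\langle X, \tau(X)\rangle$ its image is closed in $Y$. Second, $\langle X, \sigma(\tau,f)\rangle$ is submetrizable, because $\tau \subseteq \sigma(\tau,f)$ is a weaker metrizable topology. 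Third, $y$ remains a q-point in $Y$ and $Y$ remains functionally Hausdorff, as assumed.

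With these in hand, Proposition \ref{Michael_theorem} applies to the closed surjection $f\colon \langle X, \sigma(\tau,f)\rangle \to Y$ and the q-point $y$, yielding that $C = \partial_{\sigma(\tau,f)} f^{-1}(y)$ is relatively pseudocompact in $\langle X, \sigma(\tau,f)\rangle$. Since this space is submetrizable, Lemma \ref{lemma_1} shows that $\mathsf{Cl}_{\gamma}(C)$ is the same for every $\gamma \in \sigma(\tau,f)\downarrow_{metr}$, which is exactly the hypothesis of Lemma \ref{Utv_4}. Applying Lemma \ref{Utv_4} to $f\colon \langle X, \sigma(\tau,f)\rangle \to Y$ and specializing the metrizable subtopology to $\tau$, I obtain $\mathsf{Cl}_\tau(C) \subseteq f^{-1}(y)$. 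Likewise, Lemma \ref{criter_for_rel_pseud}, applied in $\langle X, \sigma(\tau,f)\rangle$, gives that $\mathsf{Cl}_\tau(C)$ is compact in $\langle X, \tau\rangle$.

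It remains to convert this into $\sigma(\tau,f)$-compactness of $C$ itself. By Lemma \ref{Utv_2} (taken with the subtopology $\tau$) we have $\mathsf{Cl}_\tau(C) \cap f^{-1}(y) = C$; combined with $\mathsf{Cl}_\tau(C) \subseteq f^{-1}(y)$ this forces $\mathsf{Cl}_\tau(C) = C$, so $C$ is $\tau$-closed and hence $\tau$-compact. Finally I observe that $\tau$ and $\sigma(\tau,f)$ induce the same subspace topology on $f^{-1}(y)$: the extra subbasic sets $f^{-1}[U]$ meet $f^{-1}(y)$ in either $\varnothing$ or $f^{-1}(y)$, so they contribute nothing on the fibre, and $\sigma(\tau,f)\upharpoonright f^{-1}(y) = \tau \upharpoonright f^{-1}(y)$. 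Since $C \subseteq f^{-1}(y)$, compactness of $C$ is an intrinsic property of this common subspace topology, so $C$ is a compact subset of $\langle X, \sigma(\tau,f)\rangle$, as required.

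The conceptual heart of the argument, and the step I expect to be the main obstacle to discover, is the first paragraph: recognising that coarsening the domain from $\tau(X)$ to $\sigma(\tau,f)$ preserves both closedness of $f$ and submetrizability of the domain, so that Proposition \ref{Michael_theorem} and Lemmas \ref{lemma_1}, \ref{criter_for_rel_pseud}, \ref{Utv_4}, \ref{Utv_2} can all be invoked for $C$ directly rather than for $\partial_{\tau(X)}f^{-1}(y)$. Once this reduction is in place, the remaining verifications (the coincidence of the two subspace topologies on $f^{-1}(y)$ and the bookkeeping with $\mathsf{Cl}_\tau$) are routine. A naive attempt to compare $\partial_{\sigma(\tau,f)}f^{-1}(y)$ directly against $\partial_{\tau(X)}f^{-1}(y)$ runs into trouble, since the coarser boundary can be strictly larger and is not in general contained in the $\tau$-closure of the finer one; rerouting through Michael's theorem on the coarsened domain is what sidesteps this difficulty.
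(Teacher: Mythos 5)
Your proposal is correct and follows essentially the same route as the paper: pass to the coarsened domain $\langle X,\sigma(\tau,f)\rangle$, apply Proposition \ref{Michael_theorem} to get relative pseudocompactness of the boundary, then chain Lemmas \ref{lemma_1}, \ref{Utv_4}, \ref{Utv_2} and \ref{criter_for_rel_pseud} to identify the boundary with its $\tau$-closure and conclude compactness via the coincidence of $\tau$ and $\sigma(\tau,f)$ on the fibre. The only cosmetic difference is the order in which you invoke Lemma \ref{criter_for_rel_pseud} relative to establishing $\mathsf{Cl}_\tau(C)=C$; both orderings are valid.
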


\begin{proof}
    Take an arbitrary $\tau \in \tau({X}) \downarrow_{metr}$. Notice that
    $$
        f \colon \langle X, \sigma(\tau, f) \rangle \to Y \text{ is continuous and closed.}
    $$
    Then from Proposition \ref{Michael_theorem} it follows that
    \begin{equation} \label{rel_psed}
        \partial_{\sigma(\tau, f)}f^{-1}(y) \text{ is relatively pseudocompact}. 
    \end{equation}
    Now from Lemmas \ref{lemma_1} and \ref{Utv_4} we can conclude that
    $$
        \mathsf{Cl}_{\tau}(\partial_{\sigma(\tau, f)}f^{-1}(y)) \subseteq f^{-1}(y).
    $$
    So from Lemma \ref{Utv_2} it follows that
    \begin{equation} \label{cl_eq}
        \mathsf{Cl}_{\tau}(\partial_{\sigma(\tau, f)}f^{-1}(y)) = \partial_{\sigma(\tau, f)}f^{-1}(y).
    \end{equation}
    From (\ref{rel_psed}), (\ref{cl_eq}) and Lemma \ref{criter_for_rel_pseud} it follows that
    $$
        \partial_{\sigma(\tau, f)}f^{-1}(y) \text{ is a compact subspace of } \langle X, \tau \rangle.
    $$
    Now, the conclusion of the lemma follows from the equation
    $$
        \sigma(\tau, f) \upharpoonright \partial_{\sigma(\tau, f)}f^{-1}(y) = \tau \upharpoonright \partial_{\sigma(\tau, f)}f^{-1}(y).
    $$
\end{proof}

\begin{lemm} \label{heart}
    Let $f\colon X \to Y$ be an open-closed map, $X$ submetrizable, $y$ a non-isolated point of $Y$ such that any HCP collection of neighbourhoods of $y$ is finite. Then $\forall \sigma \in \tau(X)\downarrow_{metr}[\mathsf{Cl}_{\sigma}(f^{-1}(y)) \text{ is a compact subset of } \langle X, \sigma\rangle]$.
\end{lemm}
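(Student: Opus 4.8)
The plan is to reduce the statement, via Lemma \ref{criter_for_rel_pseud}, to the single assertion that $f^{-1}(y)$ is relatively pseudocompact in $X$. That lemma says precisely that $f^{-1}(y)$ is relatively pseudocompact if and only if $\mathsf{Cl}_\sigma(f^{-1}(y))$ is a compact subset of $\langle X, \sigma\rangle$ for every $\sigma \in \tau(X)\downarrow_{metr}$, which is exactly the desired conclusion. So I would assume, towards a contradiction, that $f^{-1}(y)$ is not relatively pseudocompact and fix $g \in \mathsf{C}(X)$ with $g \upharpoonright f^{-1}(y)$ unbounded. Choosing $x_n \in f^{-1}(y)$ with $g(x_{n+1}) > g(x_n) + 1$ and setting $B_n := g^{-1}[(g(x_n) - \tfrac12, g(x_n) + \tfrac12)]$, the family $\{B_n\}_{n \in \omega}$ is a discrete family of open subsets of $X$ with $x_n \in B_n$; discreteness comes for free from continuity of $g$ and the gaps between the intervals, so no use of submetrizability is needed at this stage.

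Next I would push the situation forward through $f$. The two facts I would isolate first are: (i) a continuous closed surjection preserves hereditarily closure-preserving families, since for $C_\alpha \subseteq f[A_\alpha]$ one may take $B_\alpha := A_\alpha \cap f^{-1}[C_\alpha]$ and use the identity $f[\mathsf{Cl}(\bigcup B_\alpha)] = \mathsf{Cl}(f[\bigcup B_\alpha])$, valid for closed maps; and (ii) since $f$ is open, $f[W]$ is an open neighbourhood of $y$ whenever $W$ is an open set meeting $f^{-1}(y)$. The heart of the argument is then to shrink the $B_n$ to open sets $W_n$ so that the neighbourhoods $f[W_n]$ of $y$ are pairwise \emph{distinct}; this is exactly the step I expect to be the main obstacle, because a priori infinitely many disjoint $B_n$ could all map onto one and the same neighbourhood of $y$, producing only a finite collection and hence no contradiction. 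I would overcome this by a recursion that removes finitely many points at each stage. Since $Y$ is $T_1$ and $y$ is non-isolated, every neighbourhood of $y$ is infinite, and deleting a finite set from $Y$ leaves an open set still containing $y$. So, given points $w_1, \dots, w_{n-1} \in Y \setminus \{y\}$ already chosen, I set $W_n := B_n \cap f^{-1}[Y \setminus \{w_1, \dots, w_{n-1}\}]$, observe $x_n \in W_n$, and pick $w_n \in f[W_n] \setminus \{y\}$.

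This recursion guarantees the invariant $w_i \in f[W_i]$ and $w_i \notin f[W_m]$ for all $i < m$, whence the open neighbourhoods $\{f[W_n]\}_{n \in \omega}$ of $y$ are pairwise distinct. The family $\{W_n\}$ is discrete, because $W_n \subseteq B_n$ and $\{B_n\}$ is discrete, hence it is hereditarily closure-preserving; so by fact (i) the family $\{f[W_n]\}$ is a hereditarily closure-preserving collection of neighbourhoods of $y$, and being infinite it contradicts the hypothesis that every such collection is finite. This contradiction shows $f^{-1}(y)$ is relatively pseudocompact, and an appeal to Lemma \ref{criter_for_rel_pseud} then yields the stated conclusion. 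I would note that openness is used only to make the $f[W_n]$ open, closedness only through fact (i), and submetrizability only in the final invocation of Lemma \ref{criter_for_rel_pseud}; the main difficulty throughout is the distinctness of the pushed-forward neighbourhoods, resolved by the point-removal recursion above.
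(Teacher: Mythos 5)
Your proof is correct. The engine is the same as the paper's: build a discrete family of open subsets of $X$, each meeting $f^{-1}(y)$, push it through the open--closed map, and contradict the hypothesis with an infinite HCP collection of neighbourhoods of $y$ (using that closed maps preserve HCP families and open maps make the images neighbourhoods of $y$). You differ in two places. First, you reduce the lemma to relative pseudocompactness of $f^{-1}(y)$ via Lemma \ref{criter_for_rel_pseud} and extract the discrete family $\{B_n\}$ from the level sets of a single unbounded $g \in \mathsf{C}(X)$ (modulo the harmless normalization that $g \upharpoonright f^{-1}(y)$ is unbounded above); the paper instead fixes the offending $\sigma \in \tau(X)\downarrow_{metr}$ directly, takes an infinite closed discrete subset of the non-compact $\mathsf{Cl}_{\sigma}(f^{-1}(y))$, expands it to a discrete open family in the metrizable $\langle X, \sigma \rangle$, and uses density of $f^{-1}(y)$ in its $\sigma$-closure to see that each member meets the fibre. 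These are interchangeable. Second --- the genuinely different and arguably cleaner step --- you enforce pairwise distinctness of the images by the recursion $W_n := B_n \cap f^{-1}\big[Y \setminus \{w_1, \dots, w_{n-1}\}\big]$, which works uniformly; the paper instead splits into two cases (infinitely many pairwise distinct images versus eventually constant images) and only in the second case performs essentially your point-removal trick on the repeated value. Your single recursion subsumes both cases and avoids the case analysis altogether.
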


\begin{proof}
    Assume the converse. Suppose that there exists $\sigma \in \tau(X)\downarrow_{metr}$ such that $\mathsf{Cl}_{\sigma}(f^{-1}(y)) \text{ is not a compact subset of } \langle X, \sigma\rangle$. Let $\{d_n\ \colon\ n \in \omega\}$ be a closed discrete subset of $\langle X, \sigma\rangle$ that is contained in $\mathsf{Cl}_{\sigma}(f^{-1}(y))$. Now let $\{U_n\ \colon\ n \in \omega\}$ be a discrete collection of open sets of $\langle X, \sigma\rangle$ such that $d_n \in U_n$ and $d_k \nin U_n$ whenever $k \neq n$. Since $f^{-1}(y)$ is dense in $\mathsf{Cl}_{\sigma}(f^{-1}(y))$, we see that $\forall n \in \omega [U_n \cap f^{-1}(y) \neq \varnothing]$. Since $Y$ is a $T_1$ space, $y$ is non-isolated, and $f[U_n]$ is an open neighbourhood of $y$ for any $n\in\omega$, we see that $f[U_n]$ is infinite for any $n \in \omega$. Now we consider two cases.
    \begin{itemize}
        \item[1.] There exists $A \in [\omega]^{\omega}$ such that $f[U_k] \neq f[U_n]$ whenever $n \neq k \in A$. Then, since the map $f$ is open-closed, $\{f[U_n]\ \colon\ n \in A\}$ is an infinite HCP collection of neighbourhoods of $y$.
        \item[2.] There exists $k \in \omega$ such that $f[U_{n_1}] = f[U_{n_2}]$ whenever $n_1, n_2 \geq k$. Consider an arbitrary infinite set $\{y_n\ \colon\ n\in\omega\} \subseteq f[U_k] \setminus \{y\}$. Then it is easy to see that $\{f[U_n]\setminus \bigcup_{i \leq n} \{y_i\}\ \colon\ n \in \omega\}$ is an infinite HCP collection of neighbourhoods of $y$.
    \end{itemize}
    This contradiction proves the Lemma.
\end{proof}

\section{Almost networks}

In this section we introduce a notion of an almost network, such structures appear naturally when we consider closed maps from submetrizable spaces onto countably compact spaces.

\begin{deff}
    Let $X$ be a topological space, and let $N$ be a family of subsets of $X$. Then $N$ is called {\it almost network} if for every point $x \in X$ and every open neighbourhood $U$ of $x$ there exists a set $F \in N$ such that $x \in F$ and $F \setminus U$ is finite.  
\end{deff}

\begin{lemm}
    If a topological space has a countable almost network, then it is Lindelof. 
\end{lemm}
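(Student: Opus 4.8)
The plan is to take a countable almost network $N = \{F_n : n \in \omega\}$ together with an arbitrary open cover $\mathcal{U}$ of $X$, and to extract a countable subcover directly. The guiding idea is that the almost network property reduces the task of covering all of $X$ to covering, for each relevant $F \in N$, a single finite ``error set'' left over after one member of $\mathcal{U}$ is used.

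First I would isolate the subfamily
$$
N' \coloneq \{F \in N : \exists U \in \mathcal{U}\ [F \setminus U \text{ is finite}]\},
$$
which is countable because $N$ is. For each $F \in N'$ I would fix one witness $U_F \in \mathcal{U}$ with $F \setminus U_F$ finite, and then, using that $\mathcal{U}$ covers $X$, for each of the finitely many points $p \in F \setminus U_F$ I would choose some $V_p \in \mathcal{U}$ with $p \in V_p$. The resulting family
$$
\mathcal{V} \coloneq \{U_F : F \in N'\} \cup \{V_p : F \in N',\ p \in F \setminus U_F\}
$$
is a countable subfamily of $\mathcal{U}$, being the union of a countable set with a countable union of finite sets.

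It remains to check that $\mathcal{V}$ covers $X$, and this is where the almost network hypothesis does the real work. Given $x \in X$, I would choose $U \in \mathcal{U}$ with $x \in U$; by the almost network property there is $F \in N$ with $x \in F$ and $F \setminus U$ finite, so in particular $F \in N'$, witnessed by this very $U$. Now either $x \in U_F \in \mathcal{V}$, or else $x \in F \setminus U_F$, in which case $x = p$ for one of the chosen points and $x \in V_p \in \mathcal{V}$. Either way $x$ is covered, so $\mathcal{V}$ is the desired countable subcover.

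I expect no serious obstacle, as the argument is elementary. The only points requiring care are the bookkeeping that keeps $\mathcal{V}$ countable --- namely that each $F \in N'$ contributes only finitely many extra members $V_p$ --- and the observation that the set $F$ produced by the almost network property for a given $x$ automatically lies in $N'$, so that its witness $U_F$ and the corresponding finite error set have already been accounted for.
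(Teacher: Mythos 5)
Your argument is correct and complete: the family $\mathcal{V}$ is countable because each $F \in N'$ contributes one set $U_F$ plus finitely many sets $V_p$, and the coverage check correctly uses the almost network property to place every $x$ either in its witness $U_F$ or in the finite error set $F \setminus U_F$, which has already been covered. The paper states this lemma without proof, so there is no argument to compare against; yours is the natural one and fills that gap adequately.
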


\begin{lemm} \label{count_comp_is_comp}
    If a countably compact space is a closed image of a submetrizable space, then it is compact. 
\end{lemm}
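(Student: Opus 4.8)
The plan is to reduce the statement to the preceding lemma, that a space with a countable almost network is Lindel\"of: since a countably compact Lindel\"of space is compact (a countable subcover of any open cover admits a finite subcover), it suffices to produce a countable almost network on the image. Write $f\colon X\to Y$ for the given closed surjection with $X$ submetrizable and $Y$ countably compact, fix a weaker metric topology $\sigma\in\tau(X)\downarrow_{metr}$, and choose a $\sigma$-discrete base $\mathcal B=\bigcup_{n}\mathcal B_n$ of the metric space $\langle X,\sigma\rangle$. Since $\sigma\subseteq\tau(X)$, each $\mathcal B_n$ is also discrete in $X$, and so is the family of $\sigma$-closures $\{\mathsf{Cl}_\sigma B:B\in\mathcal B_n\}$, whose members are $\tau(X)$-closed and pairwise disjoint. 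The candidate almost network will be $N\coloneq\{\,f[\mathsf{Cl}_\sigma B]:B\in\mathcal B\,\}$, a family of closed subsets of $Y$.

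First I would check that $N$ is countable. Fix $n$ and suppose, towards a contradiction, that $\{f[\mathsf{Cl}_\sigma B]:B\in\mathcal B_n\}$ contains infinitely many distinct sets. Selecting representatives that distinguish these sets one at a time (no infinite subfamily can be contained in a fixed finite set), I obtain distinct points $w_i\in f[\mathsf{Cl}_\sigma B_i]$ for distinct $B_i\in\mathcal B_n$, together with preimages $u_i\in\mathsf{Cl}_\sigma B_i$, $f(u_i)=w_i$. As the members $\mathsf{Cl}_\sigma B_i$ form a discrete, pairwise disjoint family in $X$, the set $\{u_i\}$ is closed and discrete in $X$; pushing forward along the closed map $f$ shows that every subset of $\{w_i\}$ is closed in $Y$, so $\{w_i\}$ is an infinite closed discrete subset of $Y$, contradicting countable compactness. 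Hence each level contributes only finitely many distinct sets and $N$ is countable.

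The main step, and the one I expect to be the crux, is that $N$ is an almost network. Let $y\in Y$, let $V$ be an open neighbourhood of $y$, and fix $x\in f^{-1}(y)$. If some $B\in\mathcal B$ with $x\in B$ satisfied that $f[\mathsf{Cl}_\sigma B]\setminus V$ is finite we would be done, since $y=f(x)\in f[B]\subseteq f[\mathsf{Cl}_\sigma B]\in N$. Assume not. As $\mathcal B$ is a base of $\langle X,\sigma\rangle$, I may pick $B_k\in\mathcal B$ with $x\in B_k$ and $\sigma$-diameter below $1/k$; by assumption each $f[\mathsf{Cl}_\sigma B_k]\setminus V$ is infinite, so a diagonal choice yields pairwise distinct $z_k\in f[\mathsf{Cl}_\sigma B_k]\setminus V$ with preimages $w_k\in\mathsf{Cl}_\sigma B_k$, whence $\langle w_k\rangle_{k\in\omega}\xrightarrow{\langle X,\sigma\rangle}x$. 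The key observation is that $x$ cannot be a cluster point of $\langle w_k\rangle_{k\in\omega}$ in $\tau(X)$: continuity of $f$ would force $y=f(x)$ to be a cluster point of $\langle z_k\rangle_{k\in\omega}$, which is impossible since the neighbourhood $V$ of $y$ contains no $z_k$. Hence some $\tau(X)$-open $O\ni x$ misses a tail $\{w_k:k\ge K\}$; combined with $\mathsf{Cl}_\tau(\cdot)\subseteq\mathsf{Cl}_\sigma(\cdot)$ and the fact that the only $\sigma$-limit of the $w_k$ is $x$, this makes every subset of $\{w_k:k\ge K\}$ closed in $X$, so $\{w_k:k\ge K\}$ is infinite closed discrete. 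Applying $f$ as before turns $\{z_k:k\ge K\}$ into an infinite closed discrete subset of $Y$, again contradicting countable compactness.

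Therefore $N$ is a countable almost network on $Y$, so $Y$ is Lindel\"of by the preceding lemma, and being countably compact it is compact. The delicate point throughout is the interaction between the coarse metric topology $\sigma$, in which the $w_k$ converge to $x$, and the original topology $\tau(X)$, in which $f$ is continuous and closed: countable compactness of $Y$ is exactly what rules out the escaping sequences that would otherwise destroy both the countability of $N$ and the almost-network property.
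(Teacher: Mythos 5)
Your proof is correct and follows essentially the same route as the paper: build a countable almost network from the images of a $\sigma$-discrete base of the weaker metric topology (countability via closedness plus countable compactness, the almost-network property via the same escaping-sequence argument comparing $\sigma$-convergence with $\tau(X)$-clustering), then invoke the Lindel\"of lemma. The only cosmetic differences are your use of $f[\mathsf{Cl}_\sigma B]$ in place of $f[B]$ and your direct extraction of an infinite closed discrete set where the paper cites the HCP property of images under closed maps.
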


\begin{proof}
    Let $X$ be a submetrizable space, $Y$ a countably compact space, and $f\colon X \to Y$ a closed map. Take $\tau \in \tau(X) \downarrow_{metr}$ and let $\mathfrak{U} = \bigcup_{n\in \omega} \mathfrak{U}_n$ be a $\sigma$-discrete base for $\tau$. Since $f$ is closed, then $\{f[U]\ \colon\ u \in \mathfrak{U}_n\}$ is an HCP collection for any $n \in \omega$. And so, since $Y$ is countably compact, $\{f[U]\ \colon\ u \in \mathfrak{U}_n\}$ is finite for any $n \in \omega$. Now we prove that $\{f[U]\ \colon\ U \in \mathfrak{U}\}$ is an almost network for $Y$.
    
    Take $y \in Y$ and $V \in \tau(Y)$ a neigbourhood of $y$. Consider $x \in X$ such that $f(x) = y$. Let $\{U_i\ \colon\ i \in \omega\}$ be a local base at $x$ in the space $\langle X, \tau \rangle$ such that $\{U_i\ \colon\ i \in \omega\} \subseteq \mathfrak{U}$. Let us prove that 
    $$
        \exists i \in \omega [f[U_i] \setminus V \text{ is finite.}]
    $$
    Assume the converse, suppose that $\forall i \in \omega [f[U_i] \setminus V \text{ is infinite}]$. Then there exists a set $\{y_i\ \colon\ i \in \omega\}$ such that 
    \begin{equation} \label{z}
        y_i \neq y_j \text{ whenever } i \neq j.
    \end{equation}
    and 
    \begin{equation} \label{zzz}
        y_i \in f[U_i] \setminus V \text{ for any } i \in \omega. 
    \end{equation}
    Let $\{x_i\ \colon\ i \in \omega\}$ be a subset of $X$ such that $x_i \in U_i$ for any $i \in \omega$ and $f(x_i) = y_i$ for any $i \in \omega$. Take a nontrivial sequence $\langle z_n \rangle_{n \in \omega}$ in $\{x_i\ \colon\ i \in \omega\}$ such that $\langle z_n \rangle_{n \in \omega} \xrightarrow{\langle X, \tau \rangle} x$. Since $f$ is continuous, from (\ref{zzz}) it follows that $\langle z_n \rangle_{n \in \omega}$ does not converge to $x$ in $\langle X, \tau(X) \rangle$. So, $\{z_n\ \colon\ n \in \omega\}$ is a closed discrete subset of $\langle X, \tau(X) \rangle$, and since $f$ is closed, from (\ref{z}) it follows that $\{f(z_n)\ \colon\ n \in \omega\}$ is an infinite closed discrete subset of $Y$, a contradiction.

    Now we see that $\{f[U]\ \colon\ U \in \mathfrak{U}\}$ is a countable almost network for $Y$, so, from previous Lemma it follows that $Y$ is Lindelof, and since it is countably compact, we see that $Y$ is compact.
\end{proof}

\section{Main results}

\begin{teo}
    Let $X$ be submetrizabe, $Y$ a Hausdorff M-space, and $f\colon X \to Y$ closed. Then $Y$ is metrizable. 
\end{teo}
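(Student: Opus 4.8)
The plan is to upgrade the defining quasi-perfect map of the M-space to a genuine perfect map using Lemma~\ref{count_comp_is_comp}, and then to metrize $Y$ by manufacturing an honest $\sigma$-HCP network out of the ``almost network'' that a closed map off a submetrizable space always produces. Recall that $Y$ being a (Hausdorff) M-space means there is a quasi-perfect map $g\colon Y\to M$ onto a metrizable space $M$, so each fibre $g^{-1}(m)$ is countably compact. First I would restrict $f$ over each such fibre: since $f$ is closed, its restriction $f\upharpoonright f^{-1}(g^{-1}(m))\colon f^{-1}(g^{-1}(m))\to g^{-1}(m)$ is again a closed surjection (the restriction of a closed map to a saturated subset is closed), and its domain, being a subspace of $X$, is submetrizable. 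Hence by Lemma~\ref{count_comp_is_comp} every fibre $g^{-1}(m)$ is in fact compact, so $g$ is perfect. Consequently $Y$ is a perfect preimage of a metrizable space; in particular (using that $Y$ is regular, as an M-space) $Y$ is a paracompact M-space.

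Next I would extract the network data carried by $f$. Exactly as in the proof of Lemma~\ref{count_comp_is_comp}, fix a weaker metric topology $\tau$ on $X$ with a $\sigma$-discrete base $\mathfrak{U}=\bigcup_{n\in\omega}\mathfrak{U}_n$; then each $\mathcal{N}_n:=\{f[U]\colon U\in\mathfrak{U}_n\}$ is an HCP family (the closed image of a discrete family), and $\mathcal{N}:=\bigcup_{n\in\omega}\mathcal{N}_n$ is an almost network for $Y$. The key quantitative point supplied by perfectness of $g$ is that, for a compact fibre $K_m=g^{-1}(m)$, only finitely many members of each $\mathcal{N}_n$ can meet $K_m$: infinitely many would yield, after choosing distinct points, an infinite closed discrete subset of the compact set $K_m$. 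Thus the traces $\{N\cap K_m\colon N\in\mathcal{N},\,N\cap K_m\neq\varnothing\}$ form a \emph{countable} almost network on the compact Hausdorff space $K_m$, which upgrades to a countable network and so makes each $K_m$ metrizable.

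The final and hardest step is to convert the $\sigma$-HCP \emph{almost} network $\mathcal{N}$ into a genuine $\sigma$-HCP network on $Y$, the obstruction being the finitely many ``error'' points $N\setminus V$ that an almost network allows. I would kill these errors by localizing with the perfect map $g$: choosing a $\sigma$-discrete base $\{\mathcal{B}_k\}$ of $M$ and forming $\mathcal{P}_{n,k}:=\{\,N\cap g^{-1}[B]\colon N\in\mathcal{N}_n,\ B\in\mathcal{B}_k\,\}$. As $B$ shrinks toward $m$, the preimage $g^{-1}[B]$ collapses onto $K_m$ and ejects every error point lying off $K_m$, while the error points lying \emph{inside} $K_m$ are separated honestly by the countable network on the metrizable fibre $K_m$ obtained above. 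This should show $\bigcup_{n,k}\mathcal{P}_{n,k}$ is a genuine $\sigma$-HCP network, so that the regular space $Y$ is a $\sigma$-space and hence has a $G_\delta$-diagonal; an M-space with a $G_\delta$-diagonal is metrizable, giving the result. I expect this last synthesis to be the main obstacle: closed maps do not preserve $G_\delta$-diagonals (e.g.\ the double arrow space is a perfect preimage of $\mathbb{I}$ with metrizable fibres yet is non-metrizable), so the argument must genuinely exploit both the compactness of the fibres delivered by Lemma~\ref{count_comp_is_comp} \emph{and} the HCP structure inherited from the submetrizable domain, checking carefully that the localization by $g$ really eliminates all finite errors simultaneously across the non-separable base $M$.
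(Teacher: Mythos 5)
Your first step coincides exactly with the paper's: restricting $f$ over $f^{-1}[g^{-1}(m)]$ and applying Lemma~\ref{count_comp_is_comp} to upgrade the quasi-perfect map $g\colon Y\to M(Y)$ to a perfect one, whence $Y$ is paracompact. After that, however, you leave the paper's route entirely, and the route you choose has two genuine gaps. First, the assertion that a countable almost network on a compact Hausdorff space ``upgrades to a countable network and so makes each $K_m$ metrizable'' is exactly the non-trivial content you would need to prove, and you give no argument. The natural replacement $\{F\setminus E\ \colon\ F\in N,\ E\in[F]^{<\omega}\}$ is a network but is countable only when every $F$ is countable; the finite ``error sets'' depend on the open set $U$ and cannot be stripped off uniformly. (One can squeeze out hereditary Lindel\"ofness and first countability of $K_m$ from a countable almost network, but that still does not separate $K_m$ from, say, double-arrow-like behaviour without further work.) Second, and more seriously, your final step is, by your own admission, only a hope: the families $\mathcal{P}_{n,k}=\{N\cap g^{-1}[B]\}$ never eject the error points that lie \emph{inside} the fibre $K_m$, and the fibrewise countable networks you propose to use for those points vary with $m$ over a possibly non-separable $M(Y)$, so they cannot be assembled into a single $\sigma$-HCP family. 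Since closed maps do not preserve $G_\delta$-diagonals, nothing short of an actual construction closes this gap, and none is given.

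The paper avoids all of this by working pointwise with the fibres of $f$ rather than with a global network. Since every point of an M-space is a q-point, Proposition~\ref{Michael_theorem} makes $\partial_{\sigma(\tau,f)}f^{-1}(y)$ relatively pseudocompact, and Lemmas~\ref{lemma_1}, \ref{Utv_4}, \ref{Utv_2} and \ref{criter_for_rel_pseud} (packaged as Lemma~\ref{bound_is_comp}) show this boundary is compact in $\langle X,\sigma(\tau,f)\rangle$. Deleting the interior of each fibre (or all but one point of a clopen fibre) leaves a closed subspace $X_0$ on which $f$ is perfect onto $Y$; then $X_0$ is a submetrizable M-space, hence metrizable by \cite[Corollary 3.8]{gms}, and $Y$ is metrizable as a perfect image of a metrizable space \cite[Theorem 4.4.15]{eng}. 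If you want to salvage your approach, the fibre-trimming idea is the missing ingredient: it converts the closed map into a perfect map with metrizable domain, which is a much stronger conclusion than any $\sigma$-HCP network you could extract from the almost network.
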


\begin{proof}
    Fix a topology $\tau \in \tau(X)\downarrow_{metr}$ and notice that 
    $$
        f\colon \langle X, \sigma(\tau, f) \rangle \to Y \text{ is continuous and closed.}
    $$

    Since $Y$ is an M-space, from \cite[Theorem 3.6]{gms} it follows that there exist a metrizable space $M(Y)$ and a quasi-perfect map $g \colon Y \to M(Y)$. Now, from Lemma \ref{count_comp_is_comp} it follows that $\forall m \in M(Y)[g^{-1}(m) \text{ is compact}]$, and so $g\colon Y \to M(Y)$ is a perfect map. From \cite[Theorem 5.1.35]{eng} it follows that $Y$ is paracompact, and so, from \cite[Theorem 5.1.5]{eng} it follows that $Y$ is normal. 

    Since $Y$ is an M-space, from \cite{Mich} it follows that
    $$
        \forall y \in Y [y \text{ is a q-point}].
    $$
    Now from Lemma \ref{bound_is_comp} it follows that 
    $$
        \forall y \in Y[\partial_{\sigma(\tau, f)}f^{-1}(y) \text{ is a compact subset of } \langle X, \sigma(\tau, f) \rangle].
    $$
    Let $L$ be the map of $Y$ to $\sigma(\tau, f)$ such that
    \[
        L(y) \coloneq 
        \begin{cases}
        \mathsf{Int}_{\sigma(\tau, f)}(f^{-1}(y)), & \text{if } \partial_{\sigma(\tau, f)}f^{-1}(y) \neq \varnothing \\
        f^{-1}(y) \setminus \{p_y\}, & \text{else}
        \end{cases}
    \]
    for all $y \in Y$, where $p_y$ is an arbitrary point of $f^{-1}(y)$. Then $X_0 \coloneq X \setminus \bigcup_{y \in Y}L(y)$ is a closed subset of $\langle X, \sigma(\tau, f) \rangle$ and $f \upharpoonright X_0$ is a perfect map from $\langle X_0, \sigma(\tau, f) \rangle$ onto $Y$. From \cite[Corollary 3.7.3]{eng} it follows that $g\circ f$ is a perfect map from $\langle X_0, \sigma(\tau, f) \rangle$ onto $M(Y)$, and so $\langle X_0, \sigma(\tau, f) \rangle$ is an M-space. Since $\langle X_0, \sigma(\tau, f) \rangle$ is submetrizable, then from \cite[Corollary 3.8]{gms} it follows that $\langle X_0, \sigma(\tau, f) \rangle$ is metrizable, and so, since $g \circ f$ is perfect, from \cite[Theorem 4.4.15]{eng} it follows that $Y$ is metrizable. 
\end{proof}

\begin{teo}
    Let $X$ be submetrizable, $Y$ functionally Hausdorff, dense-in-itself, and $\psi(Y) = \omega$, and $f\colon X \to Y$ open-closed. Then $Y$ is submetrizable. 
\end{teo}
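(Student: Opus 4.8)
The plan is to realise $Y$ as an open perfect image of a submetrizable space and then invoke Proposition \ref{miz_theorem}. Since $f$ is already open and closed, the only ingredient missing for perfectness is compactness of the fibres, so the whole argument is organised around upgrading $f$ to a map with compact fibres without destroying openness, closedness, or submetrizability of the domain.

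First I would secure the hypotheses needed for Lemma \ref{heart}. Because $Y$ is dense-in-itself, every $y\in Y$ is non-isolated, and because $\psi(Y)=\omega$ the singleton $\{y\}$ is a $G_\delta$ of $Y$. Taking $p=y$, $A=Y$ and $G=\{y\}$ in Proposition \ref{Eng_lemma} (the condition $G\cap(A\setminus\{y\})=\varnothing$ is automatic, and $y$ is a limit point of $Y$ precisely because it is non-isolated) shows that every HCP collection of neighbourhoods of $y$ is finite. Lemma \ref{heart} then gives that $\mathsf{Cl}_\sigma(f^{-1}(y))$ is compact for every $y\in Y$ and every $\sigma\in\tau(X)\downarrow_{metr}$; equivalently, by Lemma \ref{criter_for_rel_pseud}, each $f^{-1}(y)$ is relatively pseudocompact.

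Next I would fix $\tau\in\tau(X)\downarrow_{metr}$ and pass to $\langle X,\sigma(\tau,f)\rangle$. This space is still submetrizable (it refines the metric topology $\tau$), and $f\colon\langle X,\sigma(\tau,f)\rangle\to Y$ is still closed (a $\sigma(\tau,f)$-closed set is $\tau(X)$-closed) and now also open, since a basic set $T\cap f^{-1}[V]$ with $T\in\tau$ satisfies $f[T\cap f^{-1}[V]]=f[T]\cap V$, which is open. Moreover $f^{-1}(y)$ is $\sigma(\tau,f)$-closed, and on it $\sigma(\tau,f)$ and $\tau$ induce the same subspace topology. The crucial observation is that openness of $f$ forces $\mathsf{Int}_{\sigma(\tau,f)}(f^{-1}(y))=\varnothing$: otherwise $f$ would send this nonempty open set onto the open set $\{y\}$, contradicting that $y$ is non-isolated. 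Hence $\partial_{\sigma(\tau,f)}f^{-1}(y)=f^{-1}(y)$, and I can run exactly the argument of Lemma \ref{bound_is_comp} with $f^{-1}(y)$ in the role of its boundary, using the relative pseudocompactness obtained above in place of Proposition \ref{Michael_theorem}: Lemma \ref{lemma_1} gives invariance of the closure across metric subtopologies, Lemma \ref{Utv_4} (where functional Hausdorffness of $Y$ enters) gives $\mathsf{Cl}_\tau(f^{-1}(y))\subseteq f^{-1}(y)$, and with Lemma \ref{criter_for_rel_pseud} this makes $f^{-1}(y)$ a $\tau$-closed, hence $\tau$-compact, hence $\sigma(\tau,f)$-compact set. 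Thus $f\colon\langle X,\sigma(\tau,f)\rangle\to Y$ is open and perfect, and Proposition \ref{miz_theorem} completes the proof.

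The main obstacle, and the reason the refinement to $\sigma(\tau,f)$ is needed, is that Lemma \ref{heart} only yields compactness of the closure $\mathsf{Cl}_\sigma(f^{-1}(y))$, whereas perfectness requires the fibre itself to be compact in a topology for which $f$ stays open. The fibre need not be compact in $\tau(X)$, and restricting $f$ to a subspace, as in the preceding theorem, would jeopardise openness. Working in $\sigma(\tau,f)$ resolves both difficulties at once: it collapses the subspace topology of each fibre to the coarse metric $\tau$, the empty-interior consequence of openness upgrades ``closure compact'' to ``fibre compact'', and $f$ remains simultaneously open and closed. I would take care to verify the two load-bearing technical points, namely that relative pseudocompactness transfers to $\langle X,\sigma(\tau,f)\rangle$ (it does, since $\mathsf{C}(\langle X,\sigma(\tau,f)\rangle)\subseteq\mathsf{C}(\langle X,\tau(X)\rangle)$) and that $\sigma(\tau,f)$ and $\tau$ agree on each fibre.
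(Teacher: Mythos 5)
Your proposal is correct and follows essentially the same route as the paper: Proposition \ref{Eng_lemma} plus dense-in-itself and $\psi(Y)=\omega$ to kill infinite HCP collections, Lemma \ref{heart} for compactness of closures, the chain Lemma \ref{lemma_1} $\to$ Lemma \ref{Utv_4} $\to$ Lemma \ref{criter_for_rel_pseud} to get compact fibres in $\sigma(\tau,f)$, and Proposition \ref{miz_theorem} to conclude. You in fact make explicit two points the paper leaves implicit (that openness forces $\partial_{\sigma(\tau,f)}f^{-1}(y)=f^{-1}(y)$, and that relative pseudocompactness transfers to the coarser topology $\sigma(\tau,f)$), which is a welcome clarification rather than a deviation.
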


\begin{proof}
    Fix a topology $\tau \in \tau(X)\downarrow_{metr}$ and notice that 
    $$
        f\colon \langle X, \sigma(\tau, f) \rangle \to Y \text{ is continuous and open-closed.}
    $$
    From Proposition \ref{Eng_lemma} it follows that
    \begin{equation}
        \forall y \in Y [\text{any HCP collection of neighbourhoods of } y \text{ is finite}].
    \end{equation}
    Then from Lemma \ref{heart} it follows that
    \begin{equation} \label{cl_comp}
        \forall y \in Y\ \forall \gamma \in \sigma(\tau, f)\downarrow_{metr}[\mathsf{Cl}_{\gamma}(f^{-1}(y)) \text{ is a compact subset of } \langle X, \gamma \rangle].
    \end{equation}
    Now, by Lemma \ref{criter_for_rel_pseud}, $f^{-1}(y)$ is a relatively pseudocompact subset of $\langle X, \sigma(\tau, f) \rangle$ for any $y \in Y$.

    Therefore from Lemmas \ref{lemma_1} and \ref{Utv_4} we can conclude that
    $$
        \mathsf{Cl}_{\tau}(f^{-1}(y)) = \mathsf{Cl}_{\tau}(\partial_{\sigma(\tau, f)}f^{-1}(y)) \subseteq f^{-1}(y).
    $$  
    And so, since $\sigma(\tau, f) \upharpoonright f^{-1}(y) = \tau \upharpoonright f^{-1}(y)$, From \ref{cl_comp} and Proposition \ref{miz_theorem} it follows that $Y$ is submetrizable.
\end{proof}

\begin{teo}
    Let $X$ be submetrizable, $Y$ functionally Hausdorff and dense-in-itself with the following property: $\forall y\in Y\ \exists Q(y) \subseteq Y\ [y \text{ is a non-isolated q-point in } Q(y)]$, and $f\colon X \to Y$ open-closed. Then $Y$ is submetrizable. 
\end{teo}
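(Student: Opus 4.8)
The plan is to reuse the architecture of the preceding theorem: fix a metrizable weakening $\tau \in \tau(X)\downarrow_{metr}$, pass to the topology $\sigma(\tau, f)$, and show that $f \colon \langle X, \sigma(\tau, f)\rangle \to Y$ is an open perfect map whose domain is submetrizable; submetrizability of $Y$ will then follow from Proposition \ref{miz_theorem}. Exactly as before, $f \colon \langle X, \sigma(\tau, f)\rangle \to Y$ is continuous, open, and closed, and $\langle X, \sigma(\tau, f)\rangle$ is submetrizable, since it carries the weaker metrizable topology $\tau$. As the map is already open and closed, the only remaining task is to prove that every fibre $f^{-1}(y)$ is compact in $\langle X, \sigma(\tau, f)\rangle$.

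The new difficulty is that $y$ is a q-point only in the subspace $Q(y)$, not in $Y$, so Lemma \ref{bound_is_comp} cannot be applied to $f$ directly; I would therefore localize to $Q(y)$. Fix $y \in Y$, put $Q \coloneq Q(y)$, $X_Q \coloneq f^{-1}[Q]$, and $g \coloneq f \upharpoonright X_Q \colon X_Q \to Q$. The restriction of a closed (resp.\ open) surjection to the preimage of a subspace is again closed (resp.\ open) onto that subspace, so $g$ is open-closed; moreover $X_Q$ is submetrizable and $Q$ is functionally Hausdorff as subspaces, and $y$ is a non-isolated q-point of $Q$ by hypothesis. The key bookkeeping observation is that $\sigma(\tau \upharpoonright X_Q,\, g) = \sigma(\tau, f) \upharpoonright X_Q$: restricting the defining subbase $\tau \cup \{f^{-1}[V]\}$ to $X_Q$ yields precisely $(\tau \upharpoonright X_Q) \cup \{g^{-1}[W] \colon W \in \tau(Q)\}$, because $f^{-1}[V] \cap X_Q = g^{-1}[V \cap Q]$.

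Next I would apply Lemma \ref{bound_is_comp} to the closed map $g$, the q-point $y$ of the functionally Hausdorff space $Q$, and $\tau' \coloneq \tau \upharpoonright X_Q \in \tau(X_Q)\downarrow_{metr}$, obtaining that $\partial_{\sigma(\tau', g)} g^{-1}(y)$ is compact in $\langle X_Q, \sigma(\tau', g)\rangle = \langle X_Q, \sigma(\tau, f) \upharpoonright X_Q\rangle$. It then remains to upgrade ``boundary'' to ``whole fibre,'' and here openness does the work: since $g \colon \langle X_Q, \sigma(\tau', g)\rangle \to Q$ is open (a restriction of the open map $f$) and $y$ is non-isolated in $Q$, the fibre $g^{-1}(y)$ has empty interior---otherwise $g$ would carry a nonempty open subset of $g^{-1}(y)$ onto the open singleton $\{y\}$, making $y$ isolated in $Q$. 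Consequently $\partial_{\sigma(\tau', g)} g^{-1}(y) = g^{-1}(y) = f^{-1}(y)$, so $f^{-1}(y)$ is compact in $\langle X_Q, \sigma(\tau', g)\rangle$, hence compact in $\langle X, \sigma(\tau, f)\rangle$.

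As $y$ was arbitrary, $f \colon \langle X, \sigma(\tau, f)\rangle \to Y$ is a continuous open closed map with compact fibres, i.e.\ an open perfect map from the submetrizable space $\langle X, \sigma(\tau, f)\rangle$, and Proposition \ref{miz_theorem} yields that $Y$ is submetrizable. I expect the main obstacle---and the one genuinely new ingredient beyond the previous theorem---to be the passage to the restriction $g$ over $Q(y)$ together with the identity $\sigma(\tau', g) = \sigma(\tau, f) \upharpoonright X_Q$; once these are secured, Lemma \ref{bound_is_comp} and the empty-interior argument close the proof, and notably the countable-pseudocharacter hypothesis of the previous theorem is no longer required.
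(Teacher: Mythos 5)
Your proposal is correct and follows essentially the same route as the paper's own proof: restricting to $g = f\upharpoonright f^{-1}[Q(y)]$, using the identity $\sigma(\tau,f)\upharpoonright f^{-1}[Q(y)] = \sigma(\tau\upharpoonright f^{-1}[Q(y)],\, g)$ to invoke Lemma \ref{bound_is_comp}, and then using openness of $g$ together with non-isolation of $y$ in $Q(y)$ to conclude $\partial g^{-1}(y) = f^{-1}(y)$ is compact, so that Proposition \ref{miz_theorem} applies. The only difference is expository; your remark that countable pseudocharacter is no longer needed is accurate.
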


\begin{proof}
    Fix a topology $\tau \in \tau(X)\downarrow_{metr}$ and notice that 
    $$
        f\colon \langle X, \sigma(\tau, f) \rangle \to Y \text{ is continuous and open-closed.}
    $$
    We prove that $f$ is perfect. Take $y \in Y$ and $Q(y) \subseteq Y$. Note that
    \begin{equation} \label{AAAAAAAAAA}
        \sigma(\tau, f) \upharpoonright f^{-1}[Q(y)] = \sigma(\tau \upharpoonright f^{-1}[Q(y)], f \upharpoonright f^{-1}[Q(y)]).
    \end{equation}
    Let $g \coloneq f \upharpoonright f^{-1}[Q(y)]$ and let $\pi \coloneq \sigma(\tau, f) \upharpoonright f^{-1}[Q(y)]$. Then
    $$
        g \colon \langle f^{-1}[Q(y)], \tau(X) \rangle \to Q(y) \text{ is continuous and open-closed,}
    $$
    $$
        \tau\upharpoonright f^{-1}[Q(y)] \in (\tau(X) \upharpoonright f^{-1}[Q(y)])\downarrow_{metr},
    $$
    $$
        \langle Q(y), \tau(Y) \rangle \text{ is functionally Hausdorff},
    $$
    and
    $$
        y \text{ is a q-point in } Q(y).
    $$
    So, from Lemma \ref{bound_is_comp} and (\ref{AAAAAAAAAA}) it follows that 
    $$
        \partial_{\pi}g^{-1}(y) \text{ is a compact subset of } \langle f^{-1}[Q(y)], \pi \rangle.
    $$
    Since $y$ is non-isolated in $Q(y)$ and $g$ is open, we see that $\partial_{\pi}g^{-1}(y) = g^{-1}(y) = f^{-1}(y)$. And so, $f^{-1}(y)$ is a compact subset of $\langle X, \sigma(\tau, f) \rangle$. Now, the statement of the Theorem follows from Proposition \ref{miz_theorem}.
\end{proof}

\begin{corr}
    Let $X$ be submetrizable, $Y$ functionally Hausdorff and dense-in-itself, and $f\colon X \to Y$ open-closed. If one of the following conditions holds, then $Y$ is submetriable:
    \begin{itemize}
        \item $Y$ is a q-space; 
        \item for any $y \in Y$ there exists a nontrivial sequence which is converges to $y$.
    \end{itemize}
\end{corr}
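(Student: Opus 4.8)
The plan is to deduce this Corollary directly from the previous Theorem by checking, in each of the two cases separately, that $Y$ satisfies the hypothesis
$$\forall y\in Y\ \exists Q(y) \subseteq Y\ [y \text{ is a non-isolated q-point in } Q(y)].$$
All the remaining assumptions ($X$ submetrizable, $Y$ functionally Hausdorff and dense-in-itself, $f$ open-closed) are already identical to those of that Theorem, so once this property is verified the conclusion that $Y$ is submetrizable is immediate.

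For the first case, suppose $Y$ is a q-space. Then I would simply take $Q(y) \coloneq Y$ for every $y \in Y$. By definition of a q-space, $y$ is a q-point of $Y$, hence a q-point of $Q(y) = Y$; and since $Y$ is dense-in-itself it has no isolated points, so $y$ is non-isolated in $Q(y)$. This verifies the required property.

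For the second case, suppose that for each $y \in Y$ there is a nontrivial sequence $\langle y_n \rangle_{n \in \omega}$ converging to $y$. Here I would set $Q(y) \coloneq \{y\} \cup \{y_n\ \colon\ n \in \omega\}$ with the subspace topology. First I would note that $Q(y)$ is compact: any open cover of $Q(y)$ contains a member $U$ with $y \in U$, and since $\langle y_n \rangle_{n \in \omega} \xrightarrow{Y} y$ only finitely many of the points $y_n$ lie outside $U$, so finitely many further members of the cover suffice. A compact $T_1$ space is countably compact, and in a countably compact space every point is a q-point (take each $U_i$ to be the whole space; any infinite set of distinct points then has an accumulation point). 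Hence $y$ is a q-point in $Q(y)$. Finally, since the sequence is nontrivial and $Y$ is $T_1$, the set $\{y_n\ \colon\ n \in \omega\}$ contains infinitely many points distinct from $y$ accumulating at $y$, so $y$ is non-isolated in $Q(y)$.

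In both cases the hypothesis of the previous Theorem is satisfied, and therefore $Y$ is submetrizable. I do not expect a genuine obstacle in this argument, since the whole content is the reduction to the previous Theorem. The only points demanding care are that \emph{q-point} and \emph{non-isolated} must be read with respect to the subspace topology on $Q(y)$, and that in the second case one must check that the convergent-sequence subspace $Q(y)$ is compact, which is what makes the q-point property (and hence that of $y$) automatic.
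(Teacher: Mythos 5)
Your proposal is correct and is exactly the intended derivation: the paper states this as an immediate corollary of the preceding theorem, and your verification that each of the two conditions yields a suitable $Q(y)$ (namely $Q(y)=Y$ in the q-space case, and the compact subspace formed by a nontrivial convergent sequence together with its limit in the second case) is precisely the reduction the paper leaves to the reader. The details you supply --- compactness of the sequence-plus-limit, every point of a countably compact space being a q-point, and non-isolatedness following from $T_1$ plus nontriviality of the sequence --- are all sound.
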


The following theorem shows that we cannot omit dense-in-itself condition in previous results.

\begin{teo}
    There exist a submetrizable space $X$, a regular hereditarily paracompact non submetrizable first-countable space $Y$, and an open-closed $f\colon X \to Y$.
\end{teo}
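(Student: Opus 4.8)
\emph{Proof idea.} The plan is to build the example by hand, letting the positive theorems proved above dictate its shape. First note two structural constraints. On the one hand, $Y$ cannot be dense-in-itself: if it were, then (being regular and first-countable, hence functionally Hausdorff with $\psi(Y)=\omega$) it would fall under the dense-in-itself open-closed image theorem and be submetrizable. So $Y$ must carry isolated points, and these isolated points must be \emph{dense}: otherwise $Y$ would have a nonempty dense-in-itself open subset, and restricting $f$ to the (closed, saturated) preimage of a suitable closed dense-in-itself functionally Hausdorff subspace $Z$ would give an open-closed map of a submetrizable space onto $Z$ (restrictions of open-closed maps to full preimages of subspaces are again open-closed), forcing $Z$, and hence the obstruction it carries, to be submetrizable. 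On the other hand, over every \emph{non-isolated} point $y$ the fibre is tame: since $Y$ is first-countable each $\{y\}$ is $G_\delta$, so by Proposition \ref{Eng_lemma} every HCP collection of neighbourhoods of $y$ is finite, and Lemma \ref{heart} makes $\mathsf{Cl}_\sigma(f^{-1}(y))$ compact for every $\sigma\in\tau(X)\downarrow_{metr}$. Thus the non-submetrizability of $Y$ must be a \emph{global} feature spread across the dense isolated points, and $f$ must fail to be perfect only at isolated points (non-compact fibres there), so that Proposition \ref{miz_theorem} does not apply.

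Guided by this, I would take $Y$ to be a Michael-line-type space: a metrizable ``base'' $B$ of non-isolated points together with a dense set $I$ of isolated points, each approaching $B$ along a single convergent sequence, where the global attaching pattern is chosen so that $Y$ admits \emph{no} coarser metric. The engine for non-submetrizability is the same Baire-category mechanism that makes $\mathbb{M}$ non-metrizable, pushed one level stronger: any continuous injection of $Y$ into a metric space would, by a category/cardinality argument, have to collapse an uncountable family of the isolated points, which is impossible. I would then construct $X$ by \emph{resolving} each isolated point of $Y$ into a non-compact fibre (a copy of $\omega$, say) while keeping single points or convergent sequences over $B$, topologised so that $f$ (collapsing fibres) is continuous, and so that a neighbourhood of a base-fibre point carries along exactly the nearby isolated fibres. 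Openness then holds because basic neighbourhoods map to basic neighbourhoods of $Y$; closedness holds because the compact fibres over $B$ absorb any sequence of isolated points accumulating to a base point (so images of closed sets acquire no new limit points), while isolated points are handled trivially. Submetrizability of $X$ follows by exhibiting a coarser metric: the resolved fibres map injectively into a metric target because the local accumulation at each base point is a single sequence rather than a ``fan''.

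The verification proceeds in the order: (1) $X$ submetrizable (coarser metric, via a continuous injection of the resolved fibres); (2) $f$ continuous, open, and closed; (3) $Y$ regular and first-countable (immediate from the construction) and hereditarily paracompact (obtained from hereditary Lindelöfness plus regularity, or checked directly on the base-plus-isolated structure); (4) $Y$ non-submetrizable.

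The hard part will be (4) together with keeping $X$ submetrizable. The tension is that the closedness of $f$ forces $X$ to ``catch'' every sequence of isolated points converging to a base point, and if each base point were approached by an uncountable fan of isolated points (as in the Alexandroff duplicate of an interval), this catching would faithfully reproduce the non-submetrizable structure inside $X$ and destroy its submetrizability. The whole construction therefore hinges on arranging the accumulation at each non-isolated point to stay \emph{submetrizably thin} (single sequences, giving compact fibres over $B$), while routing the failure of submetrizability into a genuinely global obstruction that no coarser metric can undo. Producing an attaching pattern that is simultaneously thin locally and non-metrizable globally, and then proving the latter rigorously, is the main obstacle.
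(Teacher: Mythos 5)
Your analysis of the structural constraints is reasonable as far as it goes: $Y$ indeed cannot be dense-in-itself, the fibres over non-isolated points are forced to be (relatively) compact by Proposition \ref{Eng_lemma} and Lemma \ref{heart}, and the failure of perfectness must therefore be concentrated on the isolated points. But the proposal stops exactly where the proof has to begin: no concrete $Y$, no concrete $X$, and no argument for non-submetrizability are produced, and you acknowledge this yourself in the last paragraph. Moreover, the design heuristic you extract --- that the accumulation of isolated points at each non-isolated point must be kept ``thin'' (a single convergent sequence), since an uncountable fan of isolated points would be faithfully reproduced in $X$ and destroy its submetrizability --- points \emph{away} from the construction that actually works. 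In the paper, $Y$ is Popov's space $\faktor{\mathsf{D}(2)\times\mathbb{M}}{\sim}$, two copies of the Michael line glued along the rationals: every non-isolated point (a glued rational) is approached by an uncountable fan of isolated points (both copies of every nearby irrational), and $Y$ is not submetrizable because, by a Baire-category argument, no $G_\delta$-diagonal can separate the twins $\langle 0,p\rangle$ and $\langle 1,p\rangle$ for all irrational $p$ simultaneously.

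The idea you are missing is how $X$ evades this obstruction despite $f$ being closed. The paper takes $X=\mathbb{R}\times\mathbb{I}$ with the Euclidean topology refined by isolating every point whose first coordinate is irrational (clearly submetrizable), fixes a partition $\mathbb{I}=I_0\cup I_1$ into disjoint dense sets, and lets the fibre over the isolated point coming from $\langle i,p\rangle$ be $\{p\}\times I_i$, while the fibre over a glued rational $q$ is the compact column $\{q\}\times\mathbb{I}$. The fibres over twin isolated points are uncountable discrete sets that are dense in each other with respect to the coarser Euclidean metric, so the metric witnessing submetrizability of $X$ simply does not see the decomposition that makes $Y$ non-submetrizable; meanwhile the Euclidean compactness of $\{q\}\times\mathbb{I}$ is what catches accumulating sequences and makes $f$ closed. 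This interleaving of non-compact fibres inside a compact column is the heart of the construction, and it is incompatible with your plan of resolving isolated points into copies of $\omega$ attached along single sequences --- a design for which it is not even clear that a non-submetrizable $Y$ exists. Without this idea (or an equally concrete substitute, together with a rigorous proof that the resulting $Y$ is not submetrizable), the proposal remains a programme rather than a proof.
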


\begin{proof}
    Consider the space $\mathsf{D}(2) \times \mathbb{M}$ ($\mathsf{D}(2)$ is the set $2$ with the discrete topology), it is a free sum of the Michael line with itself. Also consider the following equivalence relation on $\mathsf{D}(2) \times \mathbb{M}$:
    $$
        \sim\ \coloneq \{\langle x, x\rangle\ \colon\ x\in \mathsf{D}(2) \times \mathbb{M}\} \cup \{\big\langle \langle n,q\rangle, \langle k, q\rangle \big\rangle\ \colon\ q \in \mathbb{Q} \text{ and } n,k\in 2\}.
    $$
    In \cite{popov} Popov proved that $\faktor{\mathsf{D}(2)\times\mathbb{M}}{\sim}$ is hereditarily paracompact and does not have a $G_\delta$-diagonal, in particular it is not submetrizable. It is also easy to see that $\faktor{\mathsf{D}(2)\times\mathbb{M}}{\sim}$ is first-countable. Let $g\colon \mathsf{D}(2) \times \mathbb{M} \to \faktor{\mathsf{D}(2)\times\mathbb{M}}{\sim}$ be the corresponding quotient map. Note that 
    \begin{equation} \label{base_at_q}
        \forall q \in \mathbb{Q}[\{g[\mathsf{D}(2) \times (q-\frac{1}{n}, q + \frac{1}{n})]\ \colon\ n \in \omega\} \text{ is a base at } g(\langle 0, q\rangle)].
    \end{equation}

    Now let us define the new topology $\tau$ on $\mathbb{R}\times\mathbb{I}$ by the base $\tau_{\mathbb{R}\times\mathbb{I}} \cup \{\{x\} \colon x(0) \in \mathbb{P}\}$. It is obvious that $\langle \mathbb{R}\times\mathbb{I}, \tau \rangle$ is submetrizable. Fix disjoint dense subsets $I_0$ and $I_1$ of $\mathbb{I}$ such that $\mathbb{I} = I_0 \cup I_1$. Let $f$ be the map of $\langle \mathbb{R}\times\mathbb{I}, \tau \rangle$ on $\faktor{\mathsf{D}(2)\times\mathbb{M}}{\sim}$ such that 
    \[
        f(x) \coloneq 
        \begin{cases}
        g(\langle 0, x(0) \rangle), & x(1) \in I_0 \\
        g(\langle 1, x(0) \rangle), & x(1) \in I_1 \\
        \end{cases}
    \]
    We prove that $f$ is continuous and open-closed. 

    Take $a < b \in \mathbb{R}$ and $c < d \in \mathbb{I}$. Then
    \[
    \begin{split}
        f[(a,b)\times(c,d)]  &= f[(a,b)\times((c,d) \cap I_0)] \cup f[(a,b)\times((c,d) \cap I_1)]=\\ &= g[\{0\}\times(a,b)] \cup g[\{1\}\times(a,b)]=\\
        &= g[\mathsf{D}(2)\times(a,b)]
    \end{split}
    \]
    is open. And so, from (\ref{base_at_q}) it follows that $f$ is open and continuous. 

    Let $F$ be a closed subset of $\langle \mathbb{R}\times\mathbb{I}, \tau \rangle$. We prove that $f[F]$ is closed. Take a nontrivial convergent sequence $\langle y_n \rangle_{n\in\omega}$ in $f[F]$. Suppose that $y$ is a limit of $\langle y_n \rangle_{n\in\omega}$. Notice that $y = g(\langle 0, q \rangle) = g(\langle 1, q\rangle)$ for some $q \in \mathbb{Q}$, because otherwise $y$ is isolated.  For every $k \in \omega$ fix $y_{n(k)} \in g[\mathsf{D}(2) \times (q-\frac{1}{k}, q + \frac{1}{k})]$. Then $\langle y_{n(k)}\rangle_{k \in \omega}$ converges to $y$ and 
    \begin{equation} \label{fib_sub_int}
        \forall k \in \omega [f^{-1}(y_{n(k)}) \subseteq (q-\frac{1}{k}, q + \frac{1}{k}) \times \mathbb{I}].
    \end{equation}
    Now for every $k \in \omega$ fix $x_k \in f^{-1}(y_{n(k)}) \cap F$. Then there exists $\langle x_{k(m)} \rangle_{m \in \omega}$ a subsequent of $\langle x_k\rangle_{k \in \omega}$ such that $\langle x_{k(m)} \rangle_{m \in \omega}$ has a limit $x$ in $\mathbb{R}\times\mathbb{I}$ with its usual topology. From (\ref{fib_sub_int}) it follows that $x \in \{q\}\times \mathbb{I} = f^{-1}(y) $, and so $x(0) \in \mathbb{Q}$, but then $x$ is a limit point of $\langle x_{k(m)} \rangle_{m \in \omega}$ in $\langle \mathbb{R}\times\mathbb{I}, \tau \rangle$. Therefore $f(x) = y \in f[F]$.
\end{proof}

\section{Unanswered questions}

\begin{ques}
    If a pseudocompact space $X$ is a closed image of a submetrizable space, must $X$ be metrizable?
\end{ques}

\begin{ques}
    Let $X$ be an open-closed image of a submetrizable space. If $X$ has a $G_\delta$-diagonal, is $X$ submetrizable?
\end{ques}

\begin{ques}
    What kinds of internal characterizations of the closed images of
    submetrizable spaces?
\end{ques}

\begin{ques}
    What kinds of internal characterizations of the open-closed images of
    submetrizable spaces?
\end{ques}

\bigskip


\begin{thebibliography}{1}

\bibitem{enc} Hart, K., Nagata, J., Vaughan, J. Encyclopedia of general topology. (Elsevier, 2003)

\bibitem{Mich} Michael, E. (1964). A note on closed maps and compact sets. Israel Journal of Mathematics, 2, 173-176.

\bibitem{burke_eng} Burke, D., Engelking, R., Lutzer, D. (1975). Hereditarily Closure-Preserving Collections and Metrization. Proceedings of the American Mathematical Society, 51(2), 483–488. https://doi.org/10.2307/2040345

\bibitem{miz} Mizokami, T. (1980). On Submetrizability and $G_\delta$-Diagonals. Proceedings of the American Mathematical Society, 78(1), 111–115. https://doi.org/10.2307/2043050

\bibitem{oka} Oka, S. (1983). Dimension of Stratifiable Spaces. Transactions of the American Mathematical Society, 275(1), 231–243. https://doi.org/10.2307/1999015

\bibitem{svet} Svetlichny, S. A. (1988).  Open mappings of submetrizable spaces, Vestnik Moskov. Univ. Ser. 1 Mat. Mekh., no. 6, 18–20 (in Russian).

\bibitem{chaber} Chaber, J. (1982). Mappings onto metric spaces. Topology and its Applications, 14(1), 31-41.

\bibitem{popov} Popov, V. (1977). A perfect map need not preserve a $G_\delta$-diagonal. General Topology and its Applications, 7(1), 31-33.

\bibitem{gms} Gruenhage, G. (1984). Generalized metric spaces. In Handbook of set-theoretic topology (pp. 423-501). North-Holland.

\bibitem{eng} Engelking, R. (1989). General topology. Sigma series in pure mathematics, 6.

\bibitem{burke} Burke, D. K., Lutzer, D. J. (1976). Recent advances in the theory of generalized metric spaces. Topology, 24, 1-70.

\bibitem{Yan} Yan, P. F., Lü, C. (2008). Compact images of spaces with a weaker metric topology. Czechoslovak Mathematical Journal, 58(4), 921-926.

\bibitem{Lin} Lin, F., Lin, S. (2010). Maps on submetrizable spaces. Questions Answers in General Topology, 28(2), 203-215.

\bibitem{Chob} Choban, M., Mihaylova, E. (2020). Submetrizable spaces and open mappings. Acta et commentationes (Ştiinţe Exacte și ale Naturii), 10(2), 32-46.

\end{thebibliography}
\end{document}